\newcommand{\nwc}{\newcommand}
\tikzset{every picture/.style={line width=0.75pt}} 
\newtheorem{thm}{Theorem}[section]
\newtheorem{rema}[thm]{Remark}
\newcommand{\barint}{\hbox{$\int$\kern-0.75\intwidth
\vrule width 0.5\intwidth height 2.4pt depth -2pt\kern0.25\intwidth}}
\newlength\intwidth
\newcommand\avint{\hbox{\hbox{$\displaystyle \int$}\hbox{\kern-.9em{$-$}}}}
\newcommand\smavint{\hbox{\hbox{$\int$}\hbox{\kern-.75em{$-$}}}}
\nwc{\st}{^{\mbox{\it st}}}
\nwc{\qref}[1]{(\ref{#1})}
\nwc{\veloc}{v}
\nwc{\rhoc}{\beta}
\nwc{\hl}{\hat{L}}
\def\Xint#1{\mathchoice
{\XXint\displaystyle\textstyle{#1}}%
{\XXint\textstyle\scriptstyle{#1}}%
{\XXint\scriptstyle\scriptscriptstyle{#1}}%
{\XXint\scriptscriptstyle\scriptscriptstyle{#1}}%
\!\int}
\def\XXint#1#2#3{{\setbox0=\hbox{$#1{#2#3}{\int}$}
\vcenter{\hbox{$#2#3$}}\kern-.51\wd0}}
\def\dashint{\Xint-}
\nwc{\intRp}{\int_0^\infty}
\nwc{\aint}{\dashint}
\nwc{\aaint}{\dashint}
\newcommand{\cE}{{\cal E}}
\newcommand{\cF}{{\cal F}}
\newcommand{\R}{\mathbb R}
\newcommand{\be}{\begin{eqnarray}}
\newcommand{\ee}{\end{eqnarray}}
\newcommand{\ben}{\begin{eqnarray*}}
\newcommand{\een}{\end{eqnarray*}}
\newcommand\restr[2]{{
		\left.\kern-\nulldelimiterspace 
		#1 
		\littletaller 
		\right|_{#2} 
}}
\newcommand{\littletaller}{\mathchoice{\vphantom{\big|}}{}{}{}}
\title{}
\author{}
\numberwithin{equation}{section}
\begin{document}
	
\title{A Diffuse Domain Approximation with \\ Transmission-Type Boundary Conditions II: \\ Gamma Convergence}
\author{
	Toai Luong
	\thanks{Corresponding author, Department of Mathematics, The University of Tennessee, Knoxville, TN, USA. 
		Email: tluong4@utk.edu.
	} \and 
	Tadele Mengesha
	\thanks{Department of Mathematics, The University of Tennessee, Knoxville, TN, USA. 
		Email: mengesha@utk.edu.
	} \and 
	Steven M. Wise
	\thanks{Department of Mathematics, The University of Tennessee, Knoxville, TN, USA. 
	Email: swise1@utk.edu
	} \and 
	Ming Hei Wong
	\thanks{Department of Mathematics, The University of Tennessee, Knoxville, TN, USA. 
		Email: mwong4@vols.utk.edu.
	}
}

\date{\today}
\maketitle

    \begin{abstract}
Diffuse domain methods (DDMs) have gained significant attention for solving partial differential equations (PDEs) on complex geometries. These methods approximate the domain by replacing sharp boundaries with a diffuse layer of thickness $\varepsilon$, which scales with the minimum grid size. This reformulation extends the problem to a regular domain, incorporating boundary conditions via singular source terms.  
In this work, we analyze the convergence of a DDM approximation problem with transmission-type Neumann boundary conditions. 
We prove that the energy functional of the diffuse domain problem $\Gamma$--converges to the energy functional of the original problem as $\varepsilon \to 0$. 
Additionally, we show that the solution of the diffuse domain problem strongly converges in $H^1(\Omega)$, up to a subsequence, to the solution of the original problem, as $\varepsilon \to 0$.
    \end{abstract}

\begin{keywords}
	partial differential equations, phase-field approximation, diffuse domain method, diffuse interface approximation, transmission boundary conditions, gamma-convergence, reaction-diffusion equation.
\end{keywords}

    \section{Introduction}

This work is a follow-up to \cite{LuongDDM2025-1} in which we applied formal asymptotics to analyze the approximation of solutions of partial differential equations (PDEs) posed in a domain with complex geometries using a diffuse domain approach. 
This paper focuses on the rigorous variational analysis of the approximation process, where in addition to model approximation, we prove convergence of corresponding solutions. 

PDEs posed in domains with complex geometries arise in various applications, including materials science, fluid dynamics, and biology. 
In many practical problems, these domains may have intricate boundaries, evolving interfaces, or irregular shapes that complicate numerical discretization and analysis. 
Traditional numerical approaches often require conformal meshes that accurately capture domain boundaries. 
Constructing such meshes can be computationally expensive and challenging, especially in scenarios where the domain evolves over time or has small-scale geometric features.

To circumvent these difficulties, diffuse domain methods (DDMs) have emerged as versatile approaches. 
These methods (i) embed the original complex domain into a larger, simpler computational domain, like a square or a cube, and (ii) introduce a phase field function to smoothly approximate the characteristic function of the original domain. 
The governing PDEs are then modified with additional penalization terms that enforce consistency between the diffuse domain approximation and the original sharp-interface formulation. By avoiding the need for complex meshing and allowing for efficient numerical implementation, DDMs have become a widely used technique in various applications, such as phase-field modeling, where they support simulations of complex phenomena in fields like biology (e.g., \cite{Camley-DDM2013, Kockelkoren-DDM2003, Fenton-DDM2005, Lowengrub-DDM2014, Ratz-DDM2015, Voigt-DDM2011-bio}), 
fluid dynamics (e.g., \cite{Voigt-DDM2010, Voigt-DDM2011-fluid, Voigt-DDM2012, Voigt-DDM2014,MR4642032,Voigt-DDM2009}), and materials science (e.g., \cite{Thornton-DDM2016, Ratz-DDM2015, Ratz-DDM2016, Thornton-DDM2018}).

In \cite{LuongDDM2025-1} we have studied the asymptotic convergence of the
diffuse domain approximation problem in one-dimensional space. In addition, we have
provided numerical simulations and discussed their outcomes in relation to our analytical
result. In this paper, we prove the $\Gamma$-convergence of the energy functional associated with the diffuse domain approximation and the convergence of corresponding solutions in the strong $H^1(\Omega)$--topology, in any dimension. 
For motivation and background on diffuse domain problems, 
as well as asymptotic convergence analysis and numerical experiments, we refer the readers to \cite{LuongDDM2025-1} and the references therein.

To be precise, we study the following two-sided boundary value problem in an open cuboidal domain $\Omega$: 
Find a function $u_0: \Omega \to \R$ defined as
\begin{align*}
	u_0(x) = \begin{cases}
		u_1(x),  &\text{if } x \in \Omega_1 \subset \Omega, \\
		u_2(x),  &\text{if } x \in \Omega_2 = \Omega \setminus \overline{\Omega_1},
	\end{cases}
\end{align*}
where $u_1:\Omega_1 \to \R$ and $u_2:\Omega_2 \to \R$ satisfy
    \begin{align}
    \label{bvp1} 
-\Delta u_1 + \gamma u_1 &= q, \quad \text{in } \Omega_1,  
    \\
	\label{bvp2} 
-\alpha\Delta u_2 + \beta u_2 &= h, \quad \text{in } \Omega_2, 
    \\
	\label{bvp3} 
 u_1 &= u_2, \quad \text{on } \partial\Omega_1,  
    \\
	\label{bvp4} 
- \nabla (u_1 - \alpha u_2) \cdot \boldsymbol{n}_1 & = \kappa u_1 + g, \quad \text{on } \partial\Omega_1, 
    \\
	\label{bvp5} 
 \alpha \nabla u_2 \cdot {\boldsymbol{n}_2}  & = 0, \quad \text{on } \partial\Omega.
    \end{align}
Here, we assume the following:
    \begin{enumerate}
    \item[(1)] 
$\Omega_1$ is a bounded open subset of $\R^n$ with a compact $C^3$ boundary $\partial\Omega_1$ satisfying $\overline{\Omega_1} \subset \Omega$ and $\partial\Omega_1 \cap \partial\Omega = \varnothing$,  
and $\Omega_2 := \Omega \setminus \overline{\Omega_1}$ (see Figure~\ref{domain});    

    \item[(2)] 
$\boldsymbol{n}_1$ denotes the outward-pointing  unit normal  vector on $\partial\Omega_1$, and $\boldsymbol{n}_2$ denotes the outward-pointing  unit normal vector on $\partial\Omega$.

	\item[(3)] 
$h, q \in L^2(\Omega)$ and $g \in H^1(\Omega)$ are given functions;
	
	\item[(4)] 
$\alpha, \beta, \gamma$ are given positive constants, and $\kappa$ is a given nonnegative constant.

    \end{enumerate}

\begin{figure}[htb!]
    \centering
    \begin{tikzpicture}[x=0.75pt,y=0.75pt,yscale=-1,xscale=1]

\draw  [line width=2.25]  (272,46) -- (423.67,46) -- (423.67,198.17) -- (272,198.17) -- cycle ;
\draw  [fill={rgb, 255:red, 184; green, 178; blue, 178 }  ,fill opacity=1 ] (380,107.33) .. controls (417,84.33) and (403,159.33) .. (348,163.33) .. controls (293,167.33) and (284,82.33) .. (314,96.33) .. controls (344,110.33) and (343,130.33) .. (380,107.33) -- cycle ;
\draw [line width=0.75]    (325.67,103) -- (338.16,81.59) ;
\draw [shift={(339.67,79)}, rotate = 120.26] [fill={rgb, 255:red, 0; green, 0; blue, 0 }  ][line width=0.08]  [draw opacity=0] (6.25,-3) -- (0,0) -- (6.25,3) -- cycle    ;
\draw [line width=0.75]    (424.17,120) -- (448.33,120.22) ;
\draw [shift={(451.33,120.25)}, rotate = 180.53] [fill={rgb, 255:red, 0; green, 0; blue, 0 }  ][line width=0.08]  [draw opacity=0] (6.25,-3) -- (0,0) -- (6.25,3) -- cycle    ;

\draw (337,130) node [anchor=north west][inner sep=0.75pt]    {$\Omega _{1}$};
\draw (290,165) node [anchor=north west][inner sep=0.75pt]    {$\Omega _{2}$};
\draw (342.33,68) node [anchor=north west][inner sep=0.75pt]    {$\boldsymbol{n}_{1}$};
\draw (454.83,115) node [anchor=north west][inner sep=0.75pt]    {$\boldsymbol{n}_{2}$};

\end{tikzpicture}
    \caption{A domain $\Omega_1$ is covered by a larger cuboidal domain $\Omega$. $\Omega_2 := \Omega \setminus \overline{\Omega_1}$.}
	\label{domain}
\end{figure}
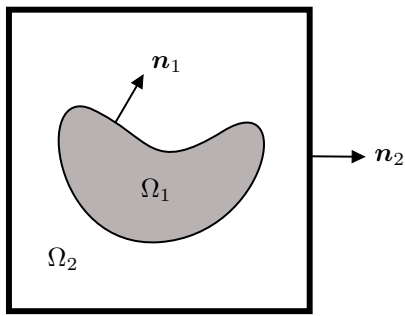

The boundary conditions across the interface $\partial\Omega_1$ are called transmission-type boundary conditions, ensuring continuity of function values across the interface while allowing a jump in flux due to underlying physical mechanisms. 
A solution $u_0$ of the two-sided problem \qref{bvp1}--\qref{bvp5} corresponds to a minimizer of the associated energy functional $\cE_0$, defined by
\begin{align}
    \label{E(u)}
	\cE_0[u] = \int_{\Omega} \left[ \frac{1}{2}(D_0 |\nabla u|^2 + c_0 u^2) - f_0 u \right] dx + \int_{\partial\Omega_1} \left(\frac{1}{2} \kappa u^2 + gu \right) dS, 
\end{align}
for $u \in H^1(\Omega)$, where
\begin{align}
	\label{D-0}D_0(x) &:= \chi_{\Omega_1}(x) + \alpha\chi_{\Omega_2}(x), \\
	\label{c-0}c_0(x) &:= \gamma\chi_{\Omega_1}(x) + \beta\chi_{\Omega_2}(x), \\
	\label{f-0}f_0(x) &:= q(x)\chi_{\Omega_1}(x) + h(x)\chi_{\Omega_2}(x).	
\end{align}
Since $\cE_0$ is coercive and strictly convex, it admits a unique minimizer, ensuring that the two-sided problem \qref{bvp1}--\qref{bvp5} has a unique solution $u_0 \in H^1(\Omega)$.

For each $\varepsilon \in (0,1)$, the diffuse domain approximation of the problem \qref{bvp1}--\qref{bvp5} is given by: 
Find a function $u_\varepsilon : \Omega \to \R$ that satisfies
    \begin{align}
	\label{diff-bvp1} 
-\nabla \cdot (D_\varepsilon \nabla u_\varepsilon) + c_\varepsilon u_\varepsilon + (\kappa u_\varepsilon + g) |\nabla\phi_\varepsilon| &= f_\varepsilon, \quad \text{in } \Omega,  
    \\
	\label{diff-bvp2} 
 D_\varepsilon \nabla u_\varepsilon \cdot \boldsymbol{n}_2 & = 0, \quad \text{on } \partial\Omega,
    \end{align}
where
    \begin{align}
	\label{D-ep}
D_\varepsilon(x) &:= \alpha + (1 - \alpha)\phi_\varepsilon(x) \approx D_0(x), 
    \\
	\label{c-ep}
c_\varepsilon(x) &:= \beta + (\gamma - \beta)\phi_\varepsilon(x) \approx c_0(x), 
    \\
	\label{f-ep}
f_\varepsilon(x) &:= h(x) + [q(x) - h(x)]\phi_\varepsilon(x) \approx f_0(x).	
    \end{align}
Here, $\phi_\varepsilon(x)$ is a phase-field function that approximates the characteristic function $\chi_{\Omega_1}(x)$ of $\Omega_1$. 
A common choice for $\phi_\varepsilon(x)$ is 
\begin{align}\label{phi-ep}
	\phi_\varepsilon(x) = \frac{1}{2}\left[ 1 + \tanh \left( \frac{r(x)}{\varepsilon}\right) \right] \approx \chi_{\Omega_1}(x),
\end{align} 
where $r(x)$ is the signed distance function from $x \in \R^n$ to $\partial\Omega_1$, 
which is assumed to be positive within $\Omega_1$ and negative outside $\overline{\Omega_1}$. 
With this choice of $\phi_\varepsilon(x)$, we note that $|\nabla\phi_\varepsilon(x)|$ approximates the surface delta function $\delta_{\partial\Omega_1}$ of $\partial\Omega_1$.
    
For each $\varepsilon\in (0,1)$, a solution $u_\varepsilon$ of the problem \qref{diff-bvp1}--\qref{diff-bvp2} corresponds to a minimizer of the associated energy functional $\cE_\varepsilon$, defined by
    \begin{align}
    \label{E-eps}
\cE_\varepsilon [u] = \int_\Omega \left[ \frac{1}{2} (D_\varepsilon |\nabla u|^2 + c_\varepsilon u^2) - f_\varepsilon u +  \left(\frac{1}{2} \kappa u^2 + gu \right)  |\nabla\phi_\varepsilon| \right] dx, 
    \end{align}
for $u \in H^1(\Omega)$. 
Since $\cE_\varepsilon$ is coercive and strictly convex, it admits a unique minimizer, ensuring that the diffuse domain problem \qref{diff-bvp1}--\qref{diff-bvp2} has a unique solution $u_\varepsilon \in H^1(\Omega)$.

\section{Main Results}
\subsection{$\Gamma$--convergence of the Energy Functional for the Neumann Boundary Condition case}\label{sec:main-result-2}
In this work, we investigate the sharp interface limit of the energy functional $\cE_\varepsilon$ as $\varepsilon \to 0$, 
using the framework of $\Gamma$--convergence. Specifically, for the Neumann boundary condition case, that is, $\kappa = 0$, we show that the $\Gamma$--limit of the energy $\cE_\varepsilon$ as $\varepsilon \to 0$ with respect to the $L^{2}(\Omega)$--topology is precisely $\cE_0$ in any dimension. In one dimension, the same result holds true for any $\kappa\geq 0$. 

To proceed, we extend the definition of $\cE_\varepsilon$ from \qref{E-eps}, for $\kappa = 0$, to all $u \in L^2(\Omega)$ by defining: 
\begin{align*}
	\cF_\varepsilon [u] = 
	\begin{cases} 
		\displaystyle  \int_\Omega \left[ \frac{1}{2} (D_\varepsilon |\nabla u|^2 + c_\varepsilon u^2) - f_\varepsilon u + gu|\nabla\phi_\varepsilon| \right] dx, &  u \in H^1(\Omega), 
		\\
		\infty, & u \in L^2(\Omega) \setminus H^1(\Omega), 
	\end{cases}
\end{align*}
where $\phi_\varepsilon$, $D_\varepsilon$, $c_\varepsilon$ and $f_\varepsilon$ are defined by \qref{phi-ep}, \qref{D-ep}, \qref{c-ep} and \qref{f-ep}, respectively. 
Similarly, we extend the definition of $\cE_0$ from \qref{E(u)}, for $\kappa = 0$, to all $u \in L^2(\Omega)$ by defining:
\begin{align*}
	\cF_0[u] = 
	\begin{cases} 
		\displaystyle  \int_{\Omega} \left[ \frac{1}{2}(D_0 |\nabla u|^2 + c_0 u^2) - f_0 u \right] dx + \int_{\partial\Omega_1} gu \; dS, & u \in H^1(\Omega), 
		\\
		\infty, & u \in L^2(\Omega) \setminus H^1(\Omega), 
	\end{cases}
\end{align*}
where $D_0, c_0$ and $f_0$ are defined by \qref{D-0}, \qref{c-0} and \qref{f-0}, respectively. 

\begin{theorem}[$\Gamma$--convergence of $\cF_\varepsilon$]\label{thm-main-2}
	As $\varepsilon \to 0$, $\cF_\varepsilon$ $\Gamma$--converges to $\cF_0$ under the strong $L^2(\Omega)$--topology. 
\end{theorem}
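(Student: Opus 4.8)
The plan is to establish the two defining properties of $\Gamma$--convergence with respect to the strong $L^2(\Om)$--topology: the liminf inequality, i.e.\ $\cF_0[u] \le \liminf_{\varepsilon\to 0}\cF_\varepsilon[u_\varepsilon]$ whenever $u_\varepsilon \to u$ in $L^2(\Om)$; and the existence of a recovery sequence, i.e.\ for each $u \in L^2(\Om)$ a sequence $u_\varepsilon \to u$ in $L^2(\Om)$ with $\limsup_{\varepsilon\to 0}\cF_\varepsilon[u_\varepsilon] \le \cF_0[u]$. Three preliminary facts will be used repeatedly. First, by \qref{D-ep}--\qref{c-ep}, $\min\{1,\al\} \le D_\varepsilon \le \max\{1,\al\}$ and $\min\{\beta,\ga\} \le c_\varepsilon \le \max\{\beta,\ga\}$ uniformly in $\varepsilon$. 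Second, since $\phi_\varepsilon \to \chi_{\Omega_1}$ pointwise a.e.\ and with values in $[0,1]$, dominated convergence gives $D_\varepsilon \to D_0$ and $c_\varepsilon \to c_0$ in $L^p(\Om)$ for every $p < \infty$, and $f_\varepsilon \to f_0$ in $L^2(\Om)$. Third, because $r$ is the signed distance to the $C^3$ hypersurface $\partial\Omega_1$, we have $|\na r| = 1$ and $|\na\phi_\varepsilon| = \tfrac{1}{2\varepsilon}\sech^2\!\big(r/\varepsilon\big)$ in a fixed tubular neighborhood of $\partial\Omega_1$ (with $|\na\phi_\varepsilon|$ exponentially small away from it); the coarea formula on that neighborhood, together with the normalization $\int_\R \tfrac12\sech^2 t\,dt = 1$, then yields $\int_\Om \psi\,|\na\phi_\varepsilon|\,dx \to \int_{\partial\Omega_1}\psi\,dS$ for every $\psi \in W^{1,1}(\Om)$, as well as the uniform diffuse trace bound $\int_\Om v^2\,|\na\phi_\varepsilon|\,dx \le C\,\|v\|_{H^1(\Om)}^2$ for all $v \in H^1(\Om)$, with $C$ independent of $\varepsilon$.

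\emph{Liminf inequality.} Let $u_\varepsilon \to u$ in $L^2(\Om)$. We may assume $\liminf_{\varepsilon\to 0}\cF_\varepsilon[u_\varepsilon] < \infty$ and, after extracting a subsequence realizing the liminf, that $\sup_\varepsilon \cF_\varepsilon[u_\varepsilon] < \infty$, so each $u_\varepsilon \in H^1(\Om)$. Using the uniform lower bounds on $D_\varepsilon$ and $c_\varepsilon$, Young's inequality on $\int_\Om f_\varepsilon u_\varepsilon$, and the diffuse trace bound together with $g \in H^1(\Om)$ to control $\int_\Om g u_\varepsilon |\na\phi_\varepsilon|$, we obtain $\cF_\varepsilon[u_\varepsilon] \ge c_1\|u_\varepsilon\|_{H^1(\Om)}^2 - c_2$ with $c_1 > 0$; hence $\{u_\varepsilon\}$ is bounded in $H^1(\Om)$ and, along a further subsequence, $u_\varepsilon \rightharpoonup u$ weakly in $H^1(\Om)$, so in particular $u \in H^1(\Om)$. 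We pass to the limit term by term. By convexity of $\xi \mapsto |\xi|^2$, $\int_\Om \tfrac12 D_\varepsilon |\na u_\varepsilon|^2 \ge \int_\Om \tfrac12 D_\varepsilon |\na u|^2 + \int_\Om D_\varepsilon \na u \cdot (\na u_\varepsilon - \na u)$; the first term converges to $\int_\Om \tfrac12 D_0 |\na u|^2$ by dominated convergence, and the second to $0$ because $D_\varepsilon \na u \to D_0 \na u$ strongly in $L^2(\Om)$ while $\na u_\varepsilon - \na u \rightharpoonup 0$ weakly in $L^2(\Om)$; the zeroth-order term $\int_\Om \tfrac12 c_\varepsilon u_\varepsilon^2$ is treated identically. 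The linear term converges since $f_\varepsilon \to f_0$ strongly and $u_\varepsilon \to u$ strongly in $L^2(\Om)$. For the boundary term one shows $\int_\Om g u_\varepsilon |\na\phi_\varepsilon|\,dx \to \int_{\partial\Omega_1} g u\,dS$: since $g \in H^1(\Om)$ and $u_\varepsilon \rightharpoonup u$ in $H^1(\Om)$, the products $g u_\varepsilon$ are bounded in $W^{1,p}(\Om)$ for some $p > 1$ and converge weakly there to $g u$, with strongly convergent traces on the level surfaces $\{r = t\}$ for small $|t|$; combined with the concentration of $|\na\phi_\varepsilon|\,dx$ on $\partial\Omega_1$ from the third preliminary fact, this gives the claimed convergence. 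Summing the four limits yields $\liminf_{\varepsilon\to 0}\cF_\varepsilon[u_\varepsilon] \ge \cF_0[u]$.

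\emph{Recovery sequence.} If $u \in L^2(\Om) \setminus H^1(\Om)$ then $\cF_0[u] = \infty$ and there is nothing to prove. If $u \in H^1(\Om)$, take the constant sequence $u_\varepsilon \equiv u$, which trivially converges to $u$ in $L^2(\Om)$. Dominated convergence gives $\int_\Om \tfrac12(D_\varepsilon|\na u|^2 + c_\varepsilon u^2) \to \int_\Om \tfrac12(D_0|\na u|^2 + c_0 u^2)$ and $\int_\Om f_\varepsilon u \to \int_\Om f_0 u$, while applying the convergence in the third preliminary fact to the fixed function $\psi = g u \in W^{1,1}(\Om)$ gives $\int_\Om g u\,|\na\phi_\varepsilon|\,dx \to \int_{\partial\Omega_1} g u\,dS$. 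Hence $\cF_\varepsilon[u] \to \cF_0[u]$, which in particular delivers the required $\limsup$ bound (indeed with equality in the limit).

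\emph{Main obstacle.} The heart of the argument is the third preliminary fact and, especially, its use in the liminf step: passing from the diffuse boundary integral $\int_\Om g u_\varepsilon |\na\phi_\varepsilon|\,dx$ to the sharp surface integral $\int_{\partial\Omega_1} g u\,dS$ when $u_\varepsilon$ converges only weakly in $H^1(\Om)$. This requires (i) a careful coarea/Fubini analysis on a fixed tubular neighborhood of $\partial\Omega_1$, using the $C^3$ regularity of $\partial\Omega_1$ to control the normal-coordinate Jacobian and the smoothness of the distance function $r$; (ii) the uniform-in-$\varepsilon$ diffuse trace inequality, which also yields equi-integrability of $\{u_\varepsilon^2 |\na\phi_\varepsilon|\}$; and (iii) strong compactness of the traces of $u_\varepsilon$ on the nearby level sets $\{r = t\}$, $|t| \lesssim \varepsilon$, in order to pass to the limit inside the concentrating kernel. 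By contrast, the recovery direction uses only the much simpler statement of this convergence for a fixed $W^{1,1}(\Om)$ function, and the remaining terms in both directions are routine consequences of the uniform bounds and dominated convergence.
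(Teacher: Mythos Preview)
Your proposal is correct and follows essentially the same architecture as the paper: uniform $H^1$ bounds from the energy, liminf via weak $H^1$ convergence and lower semicontinuity, and the constant sequence $u_\varepsilon \equiv u$ as the recovery sequence. The paper's treatment differs only in minor technical choices---it proves the gradient lower bound by showing $\sqrt{D_{\varepsilon}}\,\nabla u_\varepsilon \rightharpoonup \sqrt{D_0}\,\nabla u$ in $L^2$ and invoking weak lower semicontinuity of the norm, rather than your convexity inequality, and for the diffuse boundary term $\int_\Om g u_\varepsilon |\nabla\phi_\varepsilon|\,dx \to \int_{\partial\Omega_1} g u\,dS$ it simply cites a lemma from Abels et al.\ (the paper's Lemma~3.1) instead of sketching a direct proof. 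On that last point your outline is the right idea, but be aware that ``strongly convergent traces on the level sets $\{r=t\}$'' together with concentration of $|\nabla\phi_\varepsilon|$ does not immediately yield the limit: one needs some uniformity in $t$ (e.g.\ equicontinuity of $t\mapsto \int_{\{r=t\}} g u_\varepsilon\,dS$ coming from the $W^{1,p}$ bound via coarea) to interchange the limit in $\varepsilon$ with the approximate-identity kernel $\rho_\varepsilon(t)=\tfrac{1}{2\varepsilon}\sech^2(t/\varepsilon)$. This is exactly the content packaged in the cited lemma, so your identification of it as the main obstacle is apt.
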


\subsection{Strong $H^1(\Omega)$--convergence of the Approximation Solution for the Neumann Boundary Condition case}\label{sec:main-result-3}
We will demonstrate below via a compactness result that the sequence $\{\cF_\varepsilon\}$ is equicoercive. 
As a consequence, in the Neumann boundary condition case ($\kappa = 0$), since $u_0$ is the unique minimizer of $\cF_0$, $u_\varepsilon$ is the unique minimizer of $\cF_\varepsilon$, for each $\varepsilon$, 
    and $\cF_\varepsilon$ $\Gamma$--converges to $\cF_0$ as $\varepsilon \to 0$, 
    it follows from the Fundamental Theorem of $\Gamma$--convergence (see, e.g., \cite{Braides2014}, Theorem~2.1) that  $u_\varepsilon$ converges strongly to $u_0$ in $L^2(\Omega)$ as $\varepsilon \to 0$. While this establishes the convergence of the solution of diffuse domain problem to that of the two-sided problem in $L^{2}(\Omega),$ techniques similar to those used in \cite{Abels-DDM2015} can further show that $u_{\varepsilon}$ converges strongly to $u_0$ in $H^{1}(\Omega)$, up to a subsequence.
    
\begin{theorem}[Strong convergence of $u_\varepsilon$ in $H^1(\Omega)$]\label{thm-main-3}
    Let $u_0$ and $u_\varepsilon$ be the solutions of the two-sided problem \qref{bvp1}--\qref{bvp5} and the diffuse domain approximation problem \qref{diff-bvp1}--\qref{diff-bvp2}, respectively, for $\kappa = 0$. 
	Then, there exists a subsequence of $\{u_\varepsilon\}$ that converges strongly to $u_0$ in $H^1(\Omega)$ as $\varepsilon \to 0$. 
\end{theorem}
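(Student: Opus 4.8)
The plan is to bootstrap the strong $L^2(\Omega)$--convergence $u_\varepsilon \to u_0$ --- which, as noted above, follows from the equicoercivity of $\{\cF_\varepsilon\}$, the uniqueness of the minimizers, Theorem~\ref{thm-main-2}, and the Fundamental Theorem of $\Gamma$--convergence (\cite{Braides2014}) --- up to strong $H^1(\Omega)$--convergence along a subsequence, by proving convergence of the Dirichlet energies. Since equicoercivity together with the uniform bound $\min_{H^1(\Omega)}\cF_\varepsilon \le \cF_\varepsilon[0] = 0$ yields an $H^1(\Omega)$--bound on $\{u_\varepsilon\}$ independent of $\varepsilon$, I would first extract a subsequence (not relabeled) along which $u_\varepsilon \rightharpoonup w$ weakly in $H^1(\Omega)$; the strong $L^2(\Omega)$--convergence forces $w = u_0$, so $\nabla u_\varepsilon \rightharpoonup \nabla u_0$ weakly in $L^2(\Omega;\R^n)$ along this subsequence. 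Because $L^2(\Omega;\R^n)$ is a Hilbert space, it then suffices to establish the norm convergence $\|\nabla u_\varepsilon\|_{L^2(\Omega)} \to \|\nabla u_0\|_{L^2(\Omega)}$.

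To obtain this norm convergence, I would combine the convergence of minimum values $\cF_\varepsilon[u_\varepsilon] \to \cF_0[u_0]$ (again from the Fundamental Theorem of $\Gamma$--convergence) with the identity
\[
	\tfrac12\int_\Omega D_\varepsilon |\nabla u_\varepsilon|^2\, dx = \cF_\varepsilon[u_\varepsilon] - \int_\Omega \Big( \tfrac12 c_\varepsilon u_\varepsilon^2 - f_\varepsilon u_\varepsilon + g\, u_\varepsilon\, |\nabla\phi_\varepsilon| \Big)\, dx .
\]
The bulk lower-order terms are routine: since $\phi_\varepsilon$ is uniformly bounded and $\phi_\varepsilon \to \chi_{\Omega_1}$ a.e., one has $c_\varepsilon \to c_0$ a.e.\ and boundedly and $f_\varepsilon \to f_0$ strongly in $L^2(\Omega)$, so together with $u_\varepsilon \to u_0$ strongly in $L^2(\Omega)$ one gets $\int_\Omega c_\varepsilon u_\varepsilon^2\, dx \to \int_\Omega c_0 u_0^2\, dx$ and $\int_\Omega f_\varepsilon u_\varepsilon\, dx \to \int_\Omega f_0 u_0\, dx$. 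The delicate term is the singular one, $\int_\Omega g\, u_\varepsilon\, |\nabla\phi_\varepsilon|\, dx$; here I would invoke the trace-type convergence already established in the proof of Theorem~\ref{thm-main-2} (in the spirit of \cite{Abels-DDM2015}), namely that whenever $u_\varepsilon \rightharpoonup u_0$ weakly in $H^1(\Omega)$ one has $\int_\Omega g\, u_\varepsilon\, |\nabla\phi_\varepsilon|\, dx \to \int_{\partial\Omega_1} g\, u_0\, dS$. Substituting these limits into the identity above and using the definition of $\cF_0$ yields
\[
	\int_\Omega D_\varepsilon |\nabla u_\varepsilon|^2\, dx \;\longrightarrow\; 2\,\cF_0[u_0] - \int_\Omega \big( c_0 u_0^2 - 2 f_0 u_0 \big)\, dx - 2\int_{\partial\Omega_1} g\, u_0\, dS = \int_\Omega D_0 |\nabla u_0|^2\, dx .
\]

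To remove the weight $D_\varepsilon$ and conclude, I would expand
\begin{align*}
	\int_\Omega D_\varepsilon |\nabla u_\varepsilon - \nabla u_0|^2\, dx &= \int_\Omega D_\varepsilon |\nabla u_\varepsilon|^2\, dx - 2\int_\Omega (D_\varepsilon \nabla u_0)\cdot \nabla u_\varepsilon\, dx \\
	&\quad + \int_\Omega D_\varepsilon |\nabla u_0|^2\, dx .
\end{align*}
Since $D_\varepsilon \to D_0$ a.e.\ with $0 < \min\{1,\alpha\} \le D_\varepsilon \le \max\{1,\alpha\}$, dominated convergence gives $D_\varepsilon \nabla u_0 \to D_0 \nabla u_0$ strongly in $L^2(\Omega;\R^n)$ and $\int_\Omega D_\varepsilon |\nabla u_0|^2\, dx \to \int_\Omega D_0 |\nabla u_0|^2\, dx$; pairing the strong convergence of $D_\varepsilon \nabla u_0$ with the weak convergence $\nabla u_\varepsilon \rightharpoonup \nabla u_0$ makes the cross term tend to $\int_\Omega D_0 |\nabla u_0|^2\, dx$ as well. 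Hence $\int_\Omega D_\varepsilon |\nabla u_\varepsilon - \nabla u_0|^2\, dx \to 0$, and the uniform lower bound $D_\varepsilon \ge \min\{1,\alpha\} > 0$ forces $\nabla u_\varepsilon \to \nabla u_0$ strongly in $L^2(\Omega;\R^n)$. Combined with $u_\varepsilon \to u_0$ in $L^2(\Omega)$, this is the asserted strong $H^1(\Omega)$--convergence along the subsequence.

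The only genuinely non-routine ingredient is the convergence of the singular surface term $\int_\Omega g\, u_\varepsilon\, |\nabla\phi_\varepsilon|\, dx \to \int_{\partial\Omega_1} g\, u_0\, dS$ for a sequence converging merely weakly in $H^1(\Omega)$: since $g$ need not have a sign, $\Gamma$--type lower semicontinuity does not suffice and one truly needs the exact limit, which rests on a co-area/trace analysis near the level sets of the signed distance function $r$ and on the $C^3$ regularity of $\partial\Omega_1$. I expect this fact to be available from the machinery developed for Theorem~\ref{thm-main-2}; the remaining steps are soft functional analysis.
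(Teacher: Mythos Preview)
Your argument is correct and complete, but it follows a somewhat different route from the paper's. The paper tests the weak formulation \qref{weak-diff-eqn} against $u_{\varepsilon_k}-u_0$ to obtain directly the expansion
\[
\int_\Omega \big(D_{\varepsilon_k}|\nabla u_{\varepsilon_k}-\nabla u_0|^2 + c_{\varepsilon_k}|u_{\varepsilon_k}-u_0|^2\big)\,dx = (\text{three terms shown to vanish}),
\]
and then drives the right-hand side to zero using the same ingredients you list (strong $L^2$ convergence of $f_{\varepsilon_k}$ and $D_{\varepsilon_k}\nabla u_0$, weak $H^1$ convergence, and Lemma~\ref{trace-conv} for the surface term). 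You instead exploit the convergence of the minimum values $\cF_{\varepsilon}[u_\varepsilon]\to\cF_0[u_0]$ from the Fundamental Theorem of $\Gamma$--convergence to first isolate $\int_\Omega D_\varepsilon|\nabla u_\varepsilon|^2\,dx\to\int_\Omega D_0|\nabla u_0|^2\,dx$, and only afterwards expand the weighted square. Your approach is purely variational and never touches the Euler--Lagrange equation, which is conceptually cleaner in a $\Gamma$--convergence paper; the paper's PDE-based approach is slightly more direct in that it reaches the key quadratic expression in one step. Both arguments rely on the same non-trivial input, namely the trace-type convergence of $\int_\Omega g\,u_\varepsilon\,|\nabla\phi_\varepsilon|\,dx$ under weak $H^1$ convergence (Lemma~\ref{trace-conv}, already used for \qref{liminf-4}), so your expectation that this is ``available from the machinery developed for Theorem~\ref{thm-main-2}'' is exactly right.
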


\begin{rema}
    The results of Theorem \ref{thm-main-2} and Theorem \ref{thm-main-3} remain valid for general uniformly elliptic quadratic energy functionals. Namely, consider two symmetric, uniformly elliptic and bounded matrices of coefficients,  $\mathbb{A}(x)$ and $\mathbb{B}(x)$. Consider also two functions $\beta(x)$ and $\gamma(x)$ that are bounded from below and above by positive constants.  Define 
\begin{align*}
    \mathbb{D}_\varepsilon (x) := \mathbb{A}(x) + (\mathbb{B}(x) - \mathbb{A}(x)) \phi_{\varepsilon}(x), \quad
    c_{\varepsilon}(x) := \beta(x) + (\gamma(x) - \beta(x))\phi_\varepsilon (x).
\end{align*}
     We introduce the sequence of quadratic energy functionals 
\begin{align*}
	F_\varepsilon [u] = 
	\begin{cases} 
		\displaystyle  \int_\Omega \left[ \frac{1}{2} (\mathbb{D}_\varepsilon \nabla u \cdot \nabla u + c_\varepsilon u^2) - f_\varepsilon u + gu|\nabla\phi_\varepsilon| \right] dx, &  u \in H^1(\Omega), 
		\\
		\infty, & u \in L^2(\Omega) \setminus H^1(\Omega). 
	\end{cases}
\end{align*}
Then, $F_\varepsilon$ $\Gamma$--converges to $F_0$ under the strong $L^2(\Omega)$--topology, as $\varepsilon \to 0$, where 
\begin{align*}
	F_0[u] = 
	\begin{cases} 
		\displaystyle  \int_{\Omega} \left[ \frac{1}{2}(\mathbb{D}_0 \nabla u \cdot \nabla u + c_0 u^2) - f_0 u \right] dx + \int_{\partial\Omega_1} gu \; dS, & u \in H^1(\Omega), 
		\\
		\infty, & u \in L^2(\Omega) \setminus H^1(\Omega).
	\end{cases}
\end{align*}
Here, $\mathbb{D}_0(x)$ and $c_0(x)$ are given by
\begin{align*}
    \mathbb{D}_0 (x) := \mathbb{A}(x) \chi_{\Omega_2}(x) + \mathbb{B}(x)\chi_{\Omega_1}(x), \quad    
    c_0(x) := \beta(x)\chi_{\Omega_2}(x) + \gamma(x)\chi_{\Omega_1}(x).
\end{align*}
Moreover, the sequence of  minimizers $u_{\varepsilon}$ of $F_{\varepsilon}$ converges strongly in $H^{1}(\Omega)$ to the unique minimizer $u_0$ of $F_0$ as $\varepsilon \to 0$, up to a subsequence.  
\end{rema}

\section{Proof of Theorem~\ref{thm-main-2}: $\Gamma$--convergence of $\cF_\varepsilon$}\label{sec:proof-thm-2}
In this section, we prove Theorem~\ref{thm-main-2}. 
To identify the $\Gamma$--limit of a sequence of functionals, we require three essential components (see, e.g., \cite{Braides2002, Leoni-gamma}):
\begin{itemize}
	\item[(i)] A compactness result, which characterizes the limiting functional; 
	
	\item[(ii)] A liminf inequality, which provides a lower bound for the limiting functional; 
	
	\item[(iii)] A recovery sequence satisfying a limsup inequality, ensuring that the lower bound can be achieved.
\end{itemize}

\subsection{Preliminary Lemmas}
For each $\varepsilon \in (0,1)$ and each $w \in H^1(\Omega)$, we define the weighted norms:
\begin{align*}
	\|w\|_{\phi_\varepsilon} := \left[ \int_\Omega \phi_\varepsilon (|\nabla w|^2 + w^2) dx \right]^{1/2},  \quad
	\|w\|_{\delta_\varepsilon} := \left( \int_\Omega w^2 |\nabla\phi_\varepsilon| dx \right)^{1/2}.
\end{align*}
The following Lemmas~\ref{trace-conv}, \ref{trace-ineq} and \ref{diff-conv-bdry}  are results established in \cite{Abels-DDM2015}, corresponding to Theorem 2.3, Lemma 3.5 and Lemma 3.6 in \cite{Abels-DDM2015}, respectively.

\begin{lemma}\label{trace-conv}
	Assume that $\{ w_\varepsilon \} \subset H^1(\Omega)$ satisfies
	\begin{align*}
		 \|w_\varepsilon\|_{\phi_\varepsilon}^2 + \|w_\varepsilon\|_{\delta_\varepsilon}^2 < C,
	\end{align*}
	for some constant $C > 0$ independent of $\varepsilon$. 
	Then, there exist a subsequence of $\{ w_\varepsilon \}$ (not relabeled) and a function $\bar{w} \in H^1(\Omega_1)$ such that $\restr{w_\varepsilon}{\Omega_1} \rightharpoonup \bar{w}$ weakly in $H^1(\Omega_1)$ as $\varepsilon \to 0$, and
	\begin{align*}
		\lim_{\varepsilon \to 0} \int_\Omega f w_\varepsilon |\nabla\phi_\varepsilon| \;dx = \int_{\partial\Omega_1} f \bar{w} \;dS,
	\end{align*}
	for any function $f \in H^1(\Omega)$.
\end{lemma}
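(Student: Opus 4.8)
The plan is to combine a compactness argument on $\Omega_1$ with a sharp description of how the measures $|\nabla\phi_\varepsilon|\,dx$ concentrate on $\partial\Omega_1$. Two elementary observations drive everything. First, since $r>0$ in $\Omega_1$ we have $\phi_\varepsilon\ge\tfrac{1}{2}$ on $\overline{\Omega_1}$, so the hypothesis gives $\|w_\varepsilon\|_{H^1(\Omega_1)}^2\le 2\|w_\varepsilon\|_{\phi_\varepsilon}^2\le 2C$; passing to a subsequence (not relabeled), $w_\varepsilon|_{\Omega_1}\rightharpoonup\bar w$ in $H^1(\Omega_1)$ for some $\bar w\in H^1(\Omega_1)$, and since $\partial\Omega_1$ is (in particular) Lipschitz, Rellich--Kondrachov and compactness of the trace operator $H^1(\Omega_1)\to L^2(\partial\Omega_1)$ upgrade this to $w_\varepsilon|_{\Omega_1}\to\bar w$ strongly in $L^2(\Omega_1)$ and $\operatorname{tr}_{\partial\Omega_1}w_\varepsilon\to\operatorname{tr}_{\partial\Omega_1}\bar w$ strongly in $L^2(\partial\Omega_1)$. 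Everything below is carried out along this fixed subsequence, and the same subsequence will work for every $f$. Second, since $\partial\Omega_1\in C^3$ and $\overline{\Omega_1}\subset\Omega$ with $\partial\Omega_1\cap\partial\Omega=\varnothing$, on a fixed tubular neighborhood $N_\delta:=\{|r|<\delta\}\subset\subset\Omega$ we may use Fermi coordinates $x=\Psi(y,s)$, with $y\in\partial\Omega_1$ and $s=r(x)\in(-\delta,\delta)$: there $|\nabla\phi_\varepsilon(\Psi(y,s))|=\tfrac{1}{2\varepsilon}\sech^2(s/\varepsilon)$ (as $|\nabla r|\equiv1$ in $N_\delta$) and $dx=J(y,s)\,ds\,d\mathcal{H}^{n-1}(y)$ with $J$ continuous, $J(y,0)=1$ and $0<c\le J\le C$ on $N_\delta$; moreover $|\nabla\phi_\varepsilon|\le\tfrac{2}{\varepsilon}e^{-2\delta/\varepsilon}$ on $\Omega\setminus N_\delta$.

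I would next reduce to smooth test functions. One records the uniform trace estimate $\int_\Omega v^2|\nabla\phi_\varepsilon|\,dx\le C\|v\|_{H^1(\Omega)}^2$, valid for every \emph{fixed} $v\in H^1(\Omega)$ and all $\varepsilon\in(0,1)$: the contribution of $\Omega\setminus N_\delta$ is controlled by the exponential bound on $|\nabla\phi_\varepsilon|$ there, while on $N_\delta$, in Fermi coordinates, one bounds $v(\Psi(y,s))^2\le 2(\operatorname{tr}v)(y)^2+2\delta\int_{-\delta}^\delta|\partial_\sigma(v\circ\Psi)(y,\sigma)|^2\,d\sigma$, integrates against $\tfrac{1}{2\varepsilon}\sech^2(s/\varepsilon)\,ds\,d\mathcal{H}^{n-1}(y)$ using $\int_{-\delta}^\delta\tfrac{1}{2\varepsilon}\sech^2(s/\varepsilon)\,ds\le1$, and invokes the ordinary trace theorem. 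Granting this, Cauchy--Schwarz with respect to $|\nabla\phi_\varepsilon|\,dx$ together with $\|w_\varepsilon\|_{\delta_\varepsilon}\le C$ show that $f\mapsto\int_\Omega f w_\varepsilon|\nabla\phi_\varepsilon|\,dx$ and $f\mapsto\int_{\partial\Omega_1}f\bar w\,dS$ are Lipschitz in $\|f\|_{H^1(\Omega)}$, uniformly in $\varepsilon$; hence by density of $C^1(\overline\Omega)$ in $H^1(\Omega)$ it suffices to prove $\int_\Omega f w_\varepsilon|\nabla\phi_\varepsilon|\,dx\to\int_{\partial\Omega_1}f\bar w\,dS$ for $f\in C^1(\overline\Omega)$.

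For such $f$, split $\int_\Omega=\int_{N_\delta}+\int_{\Omega\setminus N_\delta}$; the second term vanishes by the exponential bound on $|\nabla\phi_\varepsilon|$ and $\|w_\varepsilon\|_{\delta_\varepsilon}\le C$. On $N_\delta$, pass to Fermi coordinates and replace $f(\Psi(y,s))$ by $f(y)$ and $J(y,s)$ by $1$, each at cost $O(\varepsilon)$ (using $|f(\Psi(y,s))-f(y)|+|J(y,s)-1|\le C|s|$, Cauchy--Schwarz, $\|w_\varepsilon\|_{\delta_\varepsilon}\le C$, and $\int_{-\delta}^\delta s^2\tfrac{1}{2\varepsilon}\sech^2(s/\varepsilon)\,ds=O(\varepsilon^2)$), reducing the claim to
\[
\int_{\partial\Omega_1}f(y)\,A_\varepsilon(y)\,d\mathcal{H}^{n-1}(y)\longrightarrow\int_{\partial\Omega_1}f\bar w\,dS,\qquad A_\varepsilon(y):=\int_{-\delta}^{\delta}(w_\varepsilon\circ\Psi)(y,s)\,\tfrac{1}{2\varepsilon}\sech^2(s/\varepsilon)\,ds.
\]
The crux is that $A_\varepsilon(y)=(\operatorname{tr}_{\partial\Omega_1}w_\varepsilon)(y)+R_\varepsilon(y)$ with $\|R_\varepsilon\|_{L^1(\partial\Omega_1)}\to0$. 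Indeed, $w_\varepsilon\circ\Psi\in H^1(\partial\Omega_1\times(-\delta,\delta))$, so for a.e.\ $y$ the slice $s\mapsto(w_\varepsilon\circ\Psi)(y,s)$ is in $H^1(-\delta,\delta)$ with value at $s=0$ equal to $(\operatorname{tr}w_\varepsilon)(y)$, whence $(w_\varepsilon\circ\Psi)(y,s)=(\operatorname{tr}w_\varepsilon)(y)+\int_0^s\partial_\sigma(w_\varepsilon\circ\Psi)(y,\sigma)\,d\sigma$; inserting this and using $\int_{-\delta}^\delta\tfrac{1}{2\varepsilon}\sech^2(s/\varepsilon)\,ds=\tanh(\delta/\varepsilon)=1-O(e^{-2\delta/\varepsilon})$ isolates $(\operatorname{tr}w_\varepsilon)(y)$ plus, by Fubini, an error $\int_{-\delta}^{\delta}\partial_\sigma(w_\varepsilon\circ\Psi)(y,\sigma)\,K_\varepsilon(y,\sigma)\,d\sigma$ whose kernel obeys the elementary inequalities
\[
0\le K_\varepsilon(y,\sigma)\le 1-\phi_\varepsilon(\Psi(y,\sigma))\ \ (0<\sigma<\delta),\qquad 0\le K_\varepsilon(y,\sigma)\le \phi_\varepsilon(\Psi(y,\sigma))\ \ (-\delta<\sigma<0),
\]
both of which boil down to $\tanh(\delta/\varepsilon)\le1$. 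Since $\int_{-\delta}^0\phi_\varepsilon(\Psi(y,\cdot))\,d\sigma\le\varepsilon/2$, $\int_0^\delta(1-\phi_\varepsilon(\Psi(y,\cdot)))\,d\sigma\le\varepsilon/2$ and $\phi_\varepsilon\ge\tfrac{1}{2}$ on $\Omega_1$, Cauchy--Schwarz gives $|R_\varepsilon(y)|\le C\varepsilon^{1/2}\big(\int_{-\delta}^\delta\phi_\varepsilon(\Psi(y,\sigma))\,|\partial_\sigma(w_\varepsilon\circ\Psi)(y,\sigma)|^2\,d\sigma\big)^{1/2}+O(e^{-2\delta/\varepsilon})|(\operatorname{tr}w_\varepsilon)(y)|$, and integrating over $\partial\Omega_1$, using $|\partial_\sigma(w_\varepsilon\circ\Psi)|\le|\nabla w_\varepsilon\circ\Psi|$, the Jacobian bounds, $\|w_\varepsilon\|_{\phi_\varepsilon}\le C$ and $\|\operatorname{tr}w_\varepsilon\|_{L^1(\partial\Omega_1)}\le C$, yields $\|R_\varepsilon\|_{L^1(\partial\Omega_1)}\le C\varepsilon^{1/2}+O(e^{-2\delta/\varepsilon})\to0$. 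Finally $\int_{\partial\Omega_1}f A_\varepsilon\,d\mathcal{H}^{n-1}=\int_{\partial\Omega_1}f\,\operatorname{tr}w_\varepsilon\,dS+O(\|f\|_\infty\|R_\varepsilon\|_{L^1})\to\int_{\partial\Omega_1}f\bar w\,dS$ by the strong $L^2(\partial\Omega_1)$ convergence of the traces, which together with the reduction of the previous paragraph proves the lemma.

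The one genuine obstacle is that the hypothesis exerts no control on $\nabla w_\varepsilon$ on the $\Omega_2$-side of the tube, where $\phi_\varepsilon$ and $|\nabla\phi_\varepsilon|$ are both exponentially small; a direct change of variables is therefore useless, and the entire argument must be arranged so that $\nabla w_\varepsilon$ only ever appears weighted by $\phi_\varepsilon$ — exactly the role played by the bound $K_\varepsilon(y,\sigma)\le\phi_\varepsilon(\Psi(y,\sigma))$ for $\sigma<0$ together with the normalization $\int_{-\delta}^0\phi_\varepsilon(\Psi(y,\cdot))\,d\sigma=O(\varepsilon)$. Everything else — Fermi coordinates and Jacobian estimates for a $C^3$ interface, the $O(\varepsilon)$ expansions, the density step — is routine.
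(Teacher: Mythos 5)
The paper does not actually prove this lemma: it is imported verbatim from the cited reference (it is Theorem~2.3 of \cite{Abels-DDM2015}), so there is no in-paper proof to compare against. Your argument is, as far as I can check, correct and self-contained, and it follows the same general strategy as that reference: uniform $H^1(\Omega_1)$ bound from $\phi_\varepsilon\ge\tfrac12$ on $\Omega_1$, weak $H^1(\Omega_1)$ compactness plus compactness of the trace operator, Fermi/tubular coordinates near the $C^3$ interface with the explicit profile $|\nabla\phi_\varepsilon|=\tfrac{1}{2\varepsilon}\sech^2(r/\varepsilon)$, exponential smallness off the tube, a density reduction to $f\in C^1(\overline\Omega)$ via the uniform weighted trace estimate (the paper's Lemma on $\|\cdot\|_{\delta_\varepsilon}$), and finally the normal-slicing identity that replaces the $\sech^2$-average $A_\varepsilon(y)$ by the trace of $w_\varepsilon$ up to an error controlled by the $\phi_\varepsilon$-weighted gradient. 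The key computations check out: the Fubini kernels are indeed bounded by $1-\phi_\varepsilon$ for $\sigma>0$ and by $\phi_\varepsilon$ for $\sigma<0$, and the $O(\varepsilon^{1/2})$ bound on $R_\varepsilon$ follows by splitting the Cauchy--Schwarz with weight $\phi_\varepsilon$ (using $\phi_\varepsilon\ge\tfrac12$ on the inner side and $\int_{-\delta}^0\phi_\varepsilon\,d\sigma=O(\varepsilon)$ on the outer side), which is exactly how you avoid needing any unweighted control of $\nabla w_\varepsilon$ in $\Omega_2$. The only places where you gloss over routine details are the a.e.-slice representation of the trace at $s=0$ for $H^1$ functions, the $L^1(\partial\Omega_1)$ integration of the pointwise bound on $R_\varepsilon$ (one more Cauchy--Schwarz over the compact boundary), and the Jacobian bounds $0<c\le J\le C$ with $|J(y,s)-1|\le C|s|$; all are standard for a $C^3$ compact interface. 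So this is a correct reconstruction of the omitted proof rather than a genuinely different route.
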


\begin{lemma}\label{trace-ineq}
There exists a constant $C_0 > 0$ depending only on $\Omega, \Omega_1$ and the dimension, $n$, 
such that, for any $\varepsilon \in (0,1)$ and any $w \in H^1(\Omega)$, we have
\begin{align*}
	\|w\|_{\delta_\varepsilon} \leq C_0 \|w\|_{H^1(\Omega)}.
\end{align*}
\end{lemma}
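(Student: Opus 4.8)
The plan is to split $\Omega$ into a thin tubular neighborhood of $\partial\Omega_1$, on which the weight $|\nabla\phi_\varepsilon|$ concentrates, and its complement, on which $|\nabla\phi_\varepsilon|$ is uniformly tiny. First note that since the signed distance $r$ is $1$-Lipschitz on $\R^n$, the chain rule gives $|\nabla\phi_\varepsilon(x)|=\frac{1}{2\varepsilon}\sech^2\!\big(r(x)/\varepsilon\big)\,|\nabla r(x)|\le\frac{1}{2\varepsilon}\sech^2\!\big(r(x)/\varepsilon\big)$ for a.e.\ $x\in\Omega$, so it suffices to control $\int_\Omega w^2\,\frac{1}{2\varepsilon}\sech^2(r/\varepsilon)\,dx$. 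Fix once and for all a number $\delta>0$ smaller than $\mathrm{dist}(\partial\Omega_1,\partial\Omega)$ and than the width of a tubular neighborhood of $\partial\Omega_1$ on which $r$ is $C^3$ (such a neighborhood exists because $\partial\Omega_1$ is compact and $C^3$), and set $U_\delta:=\{x\in\Omega:|r(x)|<\delta\}\subset\Omega$.

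On $\Omega\setminus U_\delta$ one has $\sech^2(r/\varepsilon)\le 4e^{-2\delta/\varepsilon}$, hence $\frac{1}{2\varepsilon}\sech^2(r/\varepsilon)\le M_\delta:=\sup_{0<\varepsilon<1}\frac{2}{\varepsilon}e^{-2\delta/\varepsilon}<\infty$, so that $\int_{\Omega\setminus U_\delta}w^2|\nabla\phi_\varepsilon|\,dx\le M_\delta\|w\|_{L^2(\Omega)}^2$, which is already of the desired form. On $U_\delta$ I would introduce the normal-coordinate diffeomorphism $\Psi(y,s)=y+s\,\boldsymbol{n}_1(y)$ from $\partial\Omega_1\times(-\delta,\delta)$ onto $U_\delta$; it satisfies $r(\Psi(y,s))=-s$ and has Jacobian $J(y,s)$ with $0<c_1\le J\le c_2$, where $c_1,c_2$ depend only on $\delta$ and the (bounded) principal curvatures of $\partial\Omega_1$. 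Writing $v_y(s):=w(\Psi(y,s))$ and using that $\sech^2$ is even, the change of variables gives
\[
\int_{U_\delta}w^2|\nabla\phi_\varepsilon|\,dx\ \le\ c_2\int_{\partial\Omega_1}\Big(\int_{-\delta}^{\delta}v_y(s)^2\,\tfrac{1}{2\varepsilon}\sech^2(s/\varepsilon)\,ds\Big)\,dS(y),
\]
and, since $\Psi$ is bi-Lipschitz, $w\in H^1(\Omega)$ implies $v_y\in H^1(-\delta,\delta)$ for a.e.\ $y$ with $|v_y'(s)|=|\nabla w(\Psi(y,s))\cdot\boldsymbol{n}_1(y)|\le|\nabla w(\Psi(y,s))|$, whence, reversing the change of variables, $\int_{\partial\Omega_1}\|v_y\|_{H^1(-\delta,\delta)}^2\,dS(y)\le c_1^{-1}\|w\|_{H^1(U_\delta)}^2$.

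Everything then reduces to the one-dimensional estimate: for every $v\in H^1(-\delta,\delta)$ and every $\varepsilon\in(0,1)$,
\[
\int_{-\delta}^{\delta}v(s)^2\,\tfrac{1}{2\varepsilon}\sech^2(s/\varepsilon)\,ds\ \le\ C_\delta\,\|v\|_{H^1(-\delta,\delta)}^2 ,
\]
which I would prove by writing $v(s)^2=v(0)^2+2\int_0^s vv'\,d\tau$, using the exact identity $\int_{-\delta}^{\delta}\frac{1}{2\varepsilon}\sech^2(s/\varepsilon)\,ds=\tanh(\delta/\varepsilon)\le1$, the cross-term bound $2\big|\int_0^s vv'\,d\tau\big|\le\|v\|_{H^1(-\delta,\delta)}^2$, and the elementary estimate $v(0)^2\le\tfrac1\delta\|v\|_{L^2(-\delta,\delta)}^2+\delta\|v'\|_{L^2(-\delta,\delta)}^2$ (obtained by averaging $v(0)^2\le 2v(s)^2+2|s|\|v'\|_{L^2}^2$ over $s\in(-\delta,\delta)$). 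Feeding this into the two displays above and collecting constants — all of which depend only on $\delta$, $\partial\Omega_1$, $\Omega$ and $n$ — yields the claimed inequality.

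The only genuinely delicate step is the passage to normal coordinates on $U_\delta$: one has to set up the diffeomorphism $\Psi$, establish the two-sided Jacobian bounds, and transfer $\|w\|_{H^1}$ to the normal fibers — this is precisely where the $C^3$ regularity and compactness of $\partial\Omega_1$ enter. Once that is done, the concentration of $|\nabla\phi_\varepsilon|$ is harmless, since along each fiber the weight is an exact $s$-derivative of unit total mass. (A variant avoiding normal coordinates writes $|\nabla\phi_\varepsilon|$ in divergence form near $\partial\Omega_1$, exploiting that the level sets of $r$ there have bounded mean curvature, and integrates by parts against $w^2$ with a cutoff supported in $U_\delta$; the normal-coordinate route seems more transparent, however.)
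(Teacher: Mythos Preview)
Your argument is correct. The paper does not actually prove this lemma: it states it as Lemma~\ref{trace-ineq} and cites it as Lemma~3.5 of \cite{Abels-DDM2015}, so there is no ``paper's own proof'' to compare against beyond that reference.

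Your route --- splitting into a tubular neighborhood $U_\delta$ and its complement, bounding $|\nabla\phi_\varepsilon|$ by the explicit profile $\tfrac{1}{2\varepsilon}\sech^2(r/\varepsilon)$, passing to normal coordinates on $U_\delta$, and reducing to a one-dimensional trace-type inequality with a unit-mass weight --- is the natural strategy and is essentially the one used in \cite{Abels-DDM2015} (there the coarea/Fermi-coordinate reduction and a 1D fiberwise estimate appear in the same way). Your 1D computation is clean: the identities $\int_{-\delta}^\delta\tfrac{1}{2\varepsilon}\sech^2(s/\varepsilon)\,ds=\tanh(\delta/\varepsilon)\le1$ and $v(s)^2=v(0)^2+2\int_0^s vv'$, together with the averaged bound for $v(0)^2$, do the job. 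The only places worth tightening in a write-up are (i) the a.e.-in-$y$ slice statement $v_y\in H^1(-\delta,\delta)$, which follows from $w\circ\Psi\in H^1(\partial\Omega_1\times(-\delta,\delta))$ and Fubini, and (ii) making explicit that the Jacobian bounds $c_1\le J\le c_2$ come from the boundedness of the second fundamental form of $\partial\Omega_1$, which is where the $C^2$ (a fortiori $C^3$) regularity is used. Neither is a gap; both are routine.
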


\begin{lemma}\label{diff-conv-bdry}
	For any $w \in W^{1,1}(\Omega)$, we have
	\begin{align*}
		\lim_{\varepsilon \to 0} \int_\Omega w |\nabla\phi_\varepsilon| \; dx = \int_{\partial\Omega_1} w \; dS.
	\end{align*}
\end{lemma}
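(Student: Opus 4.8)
The plan is to obtain the identity first on $H^1(\Omega)$ as a direct consequence of Lemma~\ref{trace-conv}, and then to extend it to $W^{1,1}(\Omega)$ by density, the one nontrivial ingredient being an $\varepsilon$-uniform trace-type bound. Since $r$ is Lipschitz with $|\nabla r|=1$ a.e., one has $|\nabla\phi_\varepsilon|=\tfrac{1}{2\varepsilon}\sech^2(r/\varepsilon)$ a.e. For $w\in H^1(\Omega)$ I would apply Lemma~\ref{trace-conv} to the constant sequence $w_\varepsilon\equiv w$, whose hypothesis holds because $\|w\|_{\phi_\varepsilon}^2\le\|w\|_{H^1(\Omega)}^2$ (as $0\le\phi_\varepsilon\le1$) and $\|w\|_{\delta_\varepsilon}\le C_0\|w\|_{H^1(\Omega)}$ by Lemma~\ref{trace-ineq}; the only weak $H^1(\Omega_1)$ subsequential limit of a constant sequence is $\restr{w}{\Omega_1}$, whose trace on $\partial\Omega_1$ agrees with that of $w$, so with test function $f\equiv1$ Lemma~\ref{trace-conv} gives $\int_\Omega w\,|\nabla\phi_\varepsilon|\,dx\to\int_{\partial\Omega_1}w\,dS$ (the limit being subsequence-independent, the whole sequence converges). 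Since $C^\infty(\overline\Omega)\subset H^1(\Omega)$ is dense in $W^{1,1}(\Omega)$ and $w\mapsto\int_{\partial\Omega_1}w\,dS$ is continuous on $W^{1,1}(\Omega)$ — because the trace $W^{1,1}(\Omega)\to L^1(\partial\Omega_1)$ is bounded, $\partial\Omega_1$ being a compact $C^3$ hypersurface in $\Omega$ — it remains to prove
\[
\int_\Omega|w|\,|\nabla\phi_\varepsilon|\,dx\ \le\ C\,\|w\|_{W^{1,1}(\Omega)}\qquad\text{for all }\varepsilon\in(0,1),
\]
with $C$ independent of $\varepsilon$.

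For this bound I would localize near $\partial\Omega_1$. As $\partial\Omega_1$ is compact, $C^3$, with $\overline{\Omega_1}\subset\Omega$, there is $\delta_0>0$ such that $r\in C^3$ on the tube $U_{\delta_0}:=\{|r|<\delta_0\}\Subset\Omega$ and the normal map $\Phi\colon\partial\Omega_1\times(-\delta_0,\delta_0)\to U_{\delta_0}$ is a $C^2$-diffeomorphism whose Jacobian $J$ satisfies $J(\cdot,0)\equiv1$ and, after shrinking $\delta_0$, $\tfrac12\le J\le2$. Off the tube, $|\nabla\phi_\varepsilon|\le\tfrac{1}{2\varepsilon}\sech^2(\delta_0/\varepsilon)$, so $\int_{\Omega\setminus U_{\delta_0}}|w|\,|\nabla\phi_\varepsilon|\,dx\le C\|w\|_{L^1(\Omega)}$ (indeed this tends to $0$). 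On $U_{\delta_0}$, the change of variables $x=\Phi(y,s)$ together with $\rho_\varepsilon(s):=\tfrac{1}{2\varepsilon}\sech^2(s/\varepsilon)\ge0$, which has $\int_\R\rho_\varepsilon\,ds=1$ and concentrates at $s=0$, gives
\[
\int_{U_{\delta_0}}|w|\,|\nabla\phi_\varepsilon|\,dx=\int_{\partial\Omega_1}\!\!\int_{-\delta_0}^{\delta_0}|w(\Phi(y,s))|\,J(y,s)\,\rho_\varepsilon(s)\,ds\,dS(y).
\]
For a.e.\ $y$ the slice $s\mapsto w(\Phi(y,s))J(y,s)$ is absolutely continuous with value $w(y)$ at $s=0$, hence $|w(\Phi(y,s))|J(y,s)\le|w(y)|+\int_{-\delta_0}^{\delta_0}\bigl|\partial_s[w(\Phi(y,\cdot))J(y,\cdot)]\bigr|\,ds$; integrating against $\rho_\varepsilon\,ds$ (mass $\le1$), then over $y$, and undoing the change of variables in the derivative term (whose Jacobian is comparable to $1$, and in which $|\partial_s\Phi|=1$) bounds $\int_{U_{\delta_0}}|w|\,|\nabla\phi_\varepsilon|\,dx$ by $C\bigl(\|w\|_{L^1(\partial\Omega_1)}+\|w\|_{W^{1,1}(U_{\delta_0})}\bigr)\le C\|w\|_{W^{1,1}(\Omega)}$. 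Adding the two pieces proves the uniform bound.

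With the $H^1$-case and the uniform bound in hand, the general case is a three-$\varepsilon$ argument: choosing $w_k\in C^\infty(\overline\Omega)$ with $w_k\to w$ in $W^{1,1}(\Omega)$, one has $\bigl|\int_\Omega(w-w_k)\,|\nabla\phi_\varepsilon|\,dx\bigr|\le C\|w-w_k\|_{W^{1,1}(\Omega)}$ uniformly in $\varepsilon$, $\bigl|\int_{\partial\Omega_1}(w-w_k)\,dS\bigr|\le C\|w-w_k\|_{W^{1,1}(\Omega)}$, and $\int_\Omega w_k\,|\nabla\phi_\varepsilon|\,dx\to\int_{\partial\Omega_1}w_k\,dS$; letting $\varepsilon\to0$ and then $k\to\infty$ completes the proof. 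I expect the main obstacle to be this $\varepsilon$-uniform trace-type estimate, since Lemma~\ref{trace-ineq} supplies only its analogue with the $H^1(\Omega)$-norm on the right-hand side; the fact that the signed distance $r$ is globally merely Lipschitz and $C^3$ only near $\partial\Omega_1$ is harmless, because $|\nabla\phi_\varepsilon|$ is exponentially concentrated precisely in the region where $r$ is smooth.
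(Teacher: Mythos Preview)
Your argument is correct. The paper itself does not prove this lemma; it simply cites it as Lemma~3.6 of \cite{Abels-DDM2015}, so there is no in-paper proof to compare against.

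Two remarks on your route. First, invoking Lemma~\ref{trace-conv} for the $H^1$ base case is formally fine within this paper (where Lemmas~\ref{trace-conv}, \ref{trace-ineq}, \ref{diff-conv-bdry} are all imported as black boxes), but note that in \cite{Abels-DDM2015} the result you are quoting is their Theorem~2.3, while the present lemma is their Lemma~3.6; it is quite plausible that the former is proved \emph{using} the latter there, in which case your reduction would be circular at the level of the original source. Second, and more substantively, the detour through Lemma~\ref{trace-conv} is unnecessary: the tubular-neighborhood apparatus you build for the uniform $W^{1,1}$ bound already contains a direct proof of the limit for smooth $w$, because $\rho_\varepsilon(s)=\tfrac{1}{2\varepsilon}\sech^2(s/\varepsilon)$ is an approximate identity and $s\mapsto w(\Phi(y,s))J(y,s)$ is continuous at $s=0$ with value $w(y)$, so dominated convergence on the inner integral gives $\int_\Omega w\,|\nabla\phi_\varepsilon|\,dx\to\int_{\partial\Omega_1}w\,dS$ immediately for $w\in C(\overline\Omega)$. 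Doing that, followed by your density step, is the standard argument (and presumably what \cite{Abels-DDM2015} does); it sidesteps any circularity and makes the $H^1$ step via Lemma~\ref{trace-conv} redundant.
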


We also prove the following elementary result, demonstrated in Lemma~\ref{diff-conv}.  We recall the definitions of $D_\varepsilon$, $c_\varepsilon$ and $f_\varepsilon$ given in \eqref{D-ep}--\eqref{f-ep}, as well as $D_0, c_0$ and $f_0$ given in \eqref{D-0}--\eqref{f-0}. 

\begin{lemma}\label{diff-conv}
	For any $w \in L^2(\Omega)$, we have
	\begin{align} 
		\label{lim-D-ep} \lim_{\varepsilon \to 0}\int_{\Omega} D_\varepsilon w^2\; dx &= \int_{\Omega} D_0 w^2\; dx, \\ 		
		\label{lim-c-ep} \lim_{\varepsilon \to 0}\int_{\Omega} c_\varepsilon w^2\; dx &= \int_{\Omega} c_0 w^2\; dx, \\
		\label{lim-f-ep} \lim_{\varepsilon \to 0}\int_{\Omega} f_\varepsilon w\; dx &= \int_{\Omega} f_0 w\; dx. 
	\end{align}
\end{lemma}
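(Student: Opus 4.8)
The plan is to obtain all three identities as immediate consequences of the Lebesgue dominated convergence theorem, once two elementary properties of the phase-field function $\phi_\varepsilon$ from \eqref{phi-ep} are recorded. The first property is pointwise convergence: for $x \in \Omega_1$ one has $r(x) > 0$, so $\tanh(r(x)/\varepsilon) \to 1$ and $\phi_\varepsilon(x) \to 1$; for $x \in \Omega_2$ one has $r(x) < 0$, so $\tanh(r(x)/\varepsilon) \to -1$ and $\phi_\varepsilon(x) \to 0$; and the exceptional set $\{x : r(x) = 0\} = \partial\Omega_1$ is a compact $C^3$ hypersurface, hence Lebesgue-null in $\R^n$. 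Therefore $\phi_\varepsilon \to \chi_{\Omega_1}$ a.e.\ in $\Omega$, and consequently $D_\varepsilon \to D_0$, $c_\varepsilon \to c_0$ and $f_\varepsilon \to f_0$ a.e.\ in $\Omega$, directly from the explicit formulas \eqref{D-ep}--\eqref{f-ep} and \eqref{D-0}--\eqref{f-0}. The second property is the uniform bound $0 \le \phi_\varepsilon \le 1$; writing each coefficient as a convex combination, namely $D_\varepsilon = \alpha(1-\phi_\varepsilon) + \phi_\varepsilon$, $c_\varepsilon = \beta(1-\phi_\varepsilon) + \gamma\phi_\varepsilon$, $f_\varepsilon = (1-\phi_\varepsilon)h + \phi_\varepsilon q$, this yields $\min\{1,\alpha\} \le D_\varepsilon \le \max\{1,\alpha\}$, $\min\{\beta,\gamma\} \le c_\varepsilon \le \max\{\beta,\gamma\}$, and $|f_\varepsilon| \le |h| + |q|$ pointwise on $\Omega$, with constants independent of $\varepsilon$.

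Next I would apply dominated convergence term by term. For \eqref{lim-D-ep}, the integrand $D_\varepsilon w^2$ is bounded above by $\max\{1,\alpha\}\, w^2 \in L^1(\Omega)$, since $w \in L^2(\Omega)$; together with the a.e.\ convergence $D_\varepsilon w^2 \to D_0 w^2$ this gives the claim. The argument for \eqref{lim-c-ep} is identical with dominating function $\max\{\beta,\gamma\}\, w^2$. For \eqref{lim-f-ep}, I would estimate $|f_\varepsilon w| \le (|h| + |q|)\,|w|$ pointwise; this dominating function belongs to $L^1(\Omega)$ by the Cauchy--Schwarz inequality because $h, q, w \in L^2(\Omega)$, so dominated convergence applies once more and yields $\int_\Omega f_\varepsilon w\, dx \to \int_\Omega f_0 w\, dx$.

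No step here constitutes a genuine obstacle; the proof is routine. The only place that uses the structural hypotheses of the paper, rather than purely elementary analysis, is the assertion that $\partial\Omega_1$ has Lebesgue measure zero, which is immediate from assumption~(1) that $\partial\Omega_1$ is a compact $C^3$ (in particular, Lipschitz) hypersurface. Everything else follows from the explicit expressions for $\phi_\varepsilon$, $D_\varepsilon$, $c_\varepsilon$, $f_\varepsilon$ and the boundedness of $\tanh$.
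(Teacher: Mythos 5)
Your proof is correct and follows essentially the same route as the paper: a.e.\ convergence of $\phi_\varepsilon$ to $\chi_{\Omega_1}$ plus uniform bounds, then dominated convergence for each term. The only cosmetic difference is in \eqref{lim-f-ep}, where you apply dominated convergence directly to $f_\varepsilon w$ with dominating function $(|h|+|q|)|w|$, whereas the paper first establishes $f_\varepsilon \to f_0$ strongly in $L^2(\Omega)$ (a fact it reuses later in Remark~\ref{bdd-f-ep}) and then pairs with $w$; both are valid.
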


\begin{proof}	
	Let us prove \qref{lim-D-ep} first. 
	The proof for \qref{lim-c-ep} is similar. 	
	Since $D_\varepsilon w^2 \to D_0 w^2$ a.e. in $\Omega$ as $\varepsilon \to 0$, 
	$|D_\varepsilon  w^2| \leq \max\{ \alpha, 1 \} w^2$, 
	and $w \in L^2(\Omega)$, 
	applying the Dominated Convergence Theorem, we obtain \qref{lim-D-ep}. 
    
    Now we proceed to prove \eqref{lim-f-ep}. 
    Since $|q(\phi_\varepsilon - \chi_{\Omega_1})|^2 \to 0$ a.e. in $\Omega$, $|q(\phi_\varepsilon - \chi_{\Omega_1})|^2 \leq q^2$, and $q\in L^{2}(\Omega)$, applying the Dominated Convergence Theorem, we get
    \begin{align}
    \|q(\phi_\varepsilon - \chi_{\Omega_1})\|_{L^{2}(\Omega)}^2 = \int_\Omega |q(\phi_\varepsilon - \chi_{\Omega_1})|^2 \; dx   \to 0, 
    \end{align}    
    as $\varepsilon \to 0$.  
    Similarly, we also obtain that $\|h((1-\phi_\varepsilon) - \chi_{\Omega_2})\|_{L^{2}(\Omega)} \to 0$ as $\varepsilon \to 0$. 
    Therefore,    
    \begin{align}
    \|f_\varepsilon - f_0\|_{L^2(\Omega)} \leq \|h((1-\phi_\varepsilon) - \chi_{\Omega_2})\|_{L^{2}(\Omega)} +  \|q(\phi_\varepsilon - \chi_{\Omega_1})\|_{L^{2}(\Omega)}\to 0, 
    \end{align}     
    as $\varepsilon \to 0$, 
    which implies that $f_\varepsilon \to f_0$ strongly in $L^2(\Omega)$ as $\varepsilon \to 0$. 
    This convergence of $f_\varepsilon$ and the fact that $w \in L^2(\Omega)$ together imply \eqref{lim-f-ep}.
\end{proof}

\begin{rema}\label{bdd-f-ep}
    We record that $f_\varepsilon \to f_0$ strongly in $L^{2}(\Omega)$ as $\varepsilon\to 0$, and 
    \begin{align}
    \|f_\varepsilon\|_{L^{2}(\Omega)} \leq \|h\|_{L^{2}(\Omega)} + \|q\|_{L^{2}(\Omega)}, \text{ for any } \varepsilon>0.
    \end{align}
    In similar fashion, $c_\varepsilon \to c_0$ and $D_\varepsilon \to D_0$ strongly in $L^{p}(\Omega)$ as $\varepsilon\to 0$, for any $p\in [1, \infty)$. 
\end{rema}

\subsection{Compactness}\label{sec:compactness}
First, we prove the compactness result for $\cF_\varepsilon$.

\begin{theorem}[Compactness]\label{compactness}
	Let $\{ \varepsilon_k \} \subset (0,1)$ be a sequence of numbers such that $\varepsilon_k \searrow 0$ as $k \to \infty$. 
	Let $\{ u_k \} \subset L^2(\Omega)$ be a sequence of functions such that, for any $k = 1,2,\ldots$, 
	$\cF_{\varepsilon_k}[u_k] < M < \infty$, for some $M > 0$ independent of $k$. 
	Then, there exist a subsequence $\{ u_{k_j} \}$ of $\{ u_k \}$ and a function $u \in H^1(\Omega)$ such that
	\begin{align*}
		u_{k_j} &\to u \text{ strongly in } L^2(\Omega), \\
		u_{k_j} &\rightharpoonup u \text{ weakly in } H^1(\Omega), \\
		u_{k_j} &\to u \text{ a.e. in } \Omega,
	\end{align*}
	as $j \to \infty$.	
\end{theorem}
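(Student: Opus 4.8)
The plan is to extract a uniform $H^1(\Omega)$ bound on $\{u_k\}$ from the energy bound $\cF_{\varepsilon_k}[u_k] < M$, and then invoke the Rellich--Kondrachov theorem. First I would write out $\cF_{\varepsilon_k}[u_k]$ and use the pointwise lower bounds $D_{\varepsilon_k}(x) \geq \min\{\alpha, 1\} =: d_0 > 0$ and $c_{\varepsilon_k}(x) \geq \min\{\beta, \gamma\} =: c_* > 0$, which hold since $\phi_{\varepsilon_k} \in [0,1]$. This gives
\begin{align*}
	M > \cF_{\varepsilon_k}[u_k] \geq \frac{1}{2}\min\{d_0, c_*\} \|u_k\|_{H^1(\Omega)}^2 - \int_\Omega f_{\varepsilon_k} u_k \, dx - \int_\Omega g u_k |\nabla\phi_{\varepsilon_k}| \, dx.
\end{align*}
The two negative terms must be absorbed. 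For $\int_\Omega f_{\varepsilon_k} u_k \, dx$, use Cauchy--Schwarz together with the uniform bound $\|f_{\varepsilon_k}\|_{L^2(\Omega)} \leq \|h\|_{L^2(\Omega)} + \|q\|_{L^2(\Omega)}$ from Remark~\ref{bdd-f-ep}, so this term is bounded by $(\|h\|_{L^2} + \|q\|_{L^2}) \|u_k\|_{L^2(\Omega)}$. For the boundary-layer term, apply Lemma~\ref{trace-ineq}: since $g \in H^1(\Omega) \hookrightarrow L^2(\Omega)$, Cauchy--Schwarz against the measure $|\nabla\phi_{\varepsilon_k}| \, dx$ gives $\left|\int_\Omega g u_k |\nabla\phi_{\varepsilon_k}| \, dx\right| \leq \|g\|_{\delta_{\varepsilon_k}} \|u_k\|_{\delta_{\varepsilon_k}} \leq C_0^2 \|g\|_{H^1(\Omega)} \|u_k\|_{H^1(\Omega)}$.

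With both bad terms controlled, the inequality becomes $\frac{1}{2}\min\{d_0,c_*\} \|u_k\|_{H^1}^2 \leq M + C_1 \|u_k\|_{H^1} + C_2$ for constants $C_1, C_2$ depending only on the fixed data $h, q, g, \Omega, \Omega_1, n$; a standard Young's inequality argument (absorbing $C_1\|u_k\|_{H^1}$ into the quadratic term) yields $\|u_k\|_{H^1(\Omega)} \leq C$ uniformly in $k$. Then, since $\Omega$ is a bounded (cuboidal, hence Lipschitz) domain, the embedding $H^1(\Omega) \hookrightarrow L^2(\Omega)$ is compact, so there is a subsequence $\{u_{k_j}\}$ and $u \in H^1(\Omega)$ with $u_{k_j} \rightharpoonup u$ weakly in $H^1(\Omega)$ and $u_{k_j} \to u$ strongly in $L^2(\Omega)$; passing to a further subsequence gives $u_{k_j} \to u$ a.e.\ in $\Omega$. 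Note $u \in H^1(\Omega)$ is automatic from weak compactness, so the finiteness of $\cF_0[u]$ (needed elsewhere) is not an issue here.

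The main obstacle is making sure the boundary-layer term $\int_\Omega g u_k |\nabla\phi_{\varepsilon_k}| \, dx$ does not destroy coercivity: a priori this term could scale badly in $\varepsilon_k$, but Lemma~\ref{trace-ineq} is precisely the uniform (in $\varepsilon$) trace-type estimate that tames it, and it is essential that $g \in H^1(\Omega)$ rather than merely $L^2$ so that $\|g\|_{\delta_{\varepsilon_k}}$ is uniformly bounded. Everything else is routine functional analysis. (In the general-matrix setting of the Remark, the same argument works with $d_0$ replaced by the uniform ellipticity constant of $\mathbb{A}, \mathbb{B}$ and $c_*$ by the uniform lower bound on $\beta(x), \gamma(x)$.)
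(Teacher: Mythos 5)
Your proposal is correct and follows essentially the same route as the paper: the same pointwise lower bounds on $D_{\varepsilon_k}$ and $c_{\varepsilon_k}$ give coercivity with constant $\min\{\alpha,\beta,\gamma,1\}$, the source term is controlled via the uniform $L^2$ bound on $f_{\varepsilon_k}$, the boundary-layer term is tamed by Lemma~\ref{trace-ineq} exactly as you describe (the paper merely applies Young's inequality before rather than after the trace estimate), and the conclusion follows from Rellich--Kondrachov plus weak compactness. No gaps.
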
	
	
\begin{proof}	
For each $k = 1,2,\ldots$, since $\cF_{\varepsilon_k}[u_k] < M < \infty$, 
by the definition of $\cF_{\varepsilon_k}$, 
we have $u_k \in H^1(\Omega)$ and 
\begin{align*}
	\cF_{\varepsilon_k}[u_k] = \int_\Omega \left[ \frac{1}{2} (D_{\varepsilon_k} |\nabla u_k|^2 + c_{\varepsilon_k} u_k^2) - f_{\varepsilon_k} u_k +  g u_k|\nabla\phi_{\varepsilon_k}| \right] dx.
\end{align*}
Let $\omega := \min \{ \alpha, \beta, \gamma, 1 \} > 0$. 
Since $D_{\varepsilon_k}(x) \geq \min \{ \alpha, 1 \}$ and $c_{\varepsilon_k}(x) \geq \min \{ \beta, \gamma \}$, 
for all $x \in \Omega$ and all $k = 1,2,\ldots$, then
\begin{align}\label{est-0}
	\int_\Omega \left[ \frac{1}{2} (D_{\varepsilon_k} |\nabla u_k|^2 + c_{\varepsilon_k} u_k^2)  \right]\; dx \geq \frac{\omega}{2} \| u_k \|^2_{H^1(\Omega)}.
\end{align}
For any constant $b > 0$, using Young's inequality followed by Remark \ref{bdd-f-ep} we get
\begin{align}\label{est-1}
	\left| \int_\Omega f_{\varepsilon_k} u_k dx \right| &\leq \int_\Omega |f_{\varepsilon_k}| |u_k| dx \nonumber \\
	&\leq \int_\Omega \left( b |u_k|^2 + \frac{1}{4b} |f_{\varepsilon_k}|^2 \right)\; dx \nonumber \\
	&\leq  b \| u_k \|_{H^1(\Omega)}^2 + \frac{1}{2b} \left(\|h\|_{L^2(\Omega)}^2 + \| q\|_{L^2(\Omega)}^2 \right).
\end{align}
By Lemma~\ref{trace-ineq}, there exists a constant $C_0 > 0$, depending only on $\Omega, \Omega_1$ and the dimension $n$, 
such that, for any $k = 1,2,\ldots$ and any $w \in H^1(\Omega)$, we have
\begin{align}
	\|w\|_{\delta_{\varepsilon_k}} \leq C_0 \| w \|_{H^1(\Omega)}.
\end{align}
Then, by Young's inequality, we have, for any $k = 1,2,\ldots$,
\begin{align}\label{est-2}
	\left| \int_\Omega g u_k |\nabla\phi_{\varepsilon_k}|\; dx \right| 
	&\leq \int_\Omega |g| |u_k| |\nabla\phi_{\varepsilon_k}| \;dx  \nonumber \\
	&\leq b \| u_k \|_{\delta_{\varepsilon_k}}^2 + \frac{1}{4b} \| g \|_{\delta_{\varepsilon_k}}^2  \nonumber \\
	&\leq C_0^2 \left( b \| u_k \|_{H^1(\Omega)}^2 + \frac{1}{4b}\| g \|_{H^1(\Omega)}^2 \right).
\end{align} 
Combining \qref{est-0}, \qref{est-1} and \qref{est-2} we get
\begin{align}\label{est-3}
	M > \cF_{\varepsilon_k}[u_k]  
	\geq \left( \frac{\omega}{2} - (C_0^2 + 1)b  \right) \| u_k \|^2_{H^1(\Omega)} - \frac{M_1}{4b}.
\end{align}
for all $k = 1,2,\ldots$, 
where $M_1 := 2\| h \|_{L^2(\Omega)}^2 + 2\| q  \|_{L^2(\Omega)}^2 + \| g \|_{H^1(\Omega)}^2$. 
Choose $b = \omega / (4C_0^2 + 4)$, we obtain
\begin{align}\label{est-4}
	\| u_k \|_{H^1(\Omega)}^2 \leq \frac{4}{\omega} \left( M + \frac{(C_0^2 + 1) M_1}{\omega} \right), 
\end{align}
for all $k = 1,2,\ldots$ 
Thus, using $H^1(\Omega) \subset\subset L^2(\Omega)$ and the weak compactness of $H^1(\Omega)$,
there exist a subsequence $\{ u_{k_j} \}$ of $\{ u_k \}$ and a function $u \in H^1(\Omega)$ such that
\begin{align*}
	u_{k_j} &\to u \text{ strongly in } L^2(\Omega), \\
	u_{k_j} &\rightharpoonup u \text{ weakly in } H^1(\Omega), \\
	u_{k_j} &\to u \text{ a.e. in } \Omega,
\end{align*}
as $j \to \infty$.		
\end{proof}

\begin{rema}The compactness result above establishes the equicoercivity of the sequence $\{\cF_\varepsilon\}$. Indeed, for any sequence $\{u_\varepsilon\}$, if  $\sup_{\varepsilon >0} \cF_{\varepsilon}(u_\varepsilon) < \infty$, the same proof as above shows that $\{u_\varepsilon\}$ is precompact in $L^{2}(\Omega)$. 
\end{rema}

\subsection{Liminf Inequality}\label{sec:liminf-ineq}
Now we prove the following Theorem~\ref{liminf-ineq}, 
which establishes the liminf inequality for the $\Gamma$--convergence result.

\begin{theorem}[Liminf Inequality]\label{liminf-ineq}
	Let $\{ \varepsilon_k \} \subset (0,1)$ be a sequence of numbers such that $\varepsilon_k \searrow 0$ as $k \to \infty$. 
	For any funtion $u \in L^2(\Omega)$ and any sequence $\{ u_k \} \subset L^2(\Omega)$ 
	that satisfies $u_k \to u$ strongly in $L^2(\Omega)$ as $k \to \infty$, we have
	\begin{align}\label{liminf}
		\liminf_{k \to \infty} \cF_{\varepsilon_k}[u_k] \geq \cF_0[u].
	\end{align}
\end{theorem}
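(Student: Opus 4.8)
The plan is to run the direct method, reducing to a subsequence along which the energy converges and the compactness machinery of Theorem~\ref{compactness} applies, and then passing to the limit term by term. First I would set $L := \liminf_{k\to\infty}\cF_{\varepsilon_k}[u_k]$; if $L=+\infty$ there is nothing to prove, so assume $L<\infty$ and pass to a subsequence (not relabeled) along which $\cF_{\varepsilon_k}[u_k]\to L$ and $\cF_{\varepsilon_k}[u_k]\le L+1$ for all large $k$. In particular $u_k\in H^1(\Omega)$ eventually, and the a priori bound \eqref{est-4} established in the proof of Theorem~\ref{compactness} gives $\sup_k\|u_k\|_{H^1(\Omega)}<\infty$. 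Extracting a further subsequence and using $u_k\to u$ strongly in $L^2(\Omega)$ to identify the limit, I obtain $u\in H^1(\Omega)$ with $u_k\rightharpoonup u$ weakly in $H^1(\Omega)$, $\nabla u_k\rightharpoonup\nabla u$ weakly in $L^2(\Omega;\R^n)$, and $u_k\to u$ a.e.\ in $\Omega$.

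Next I treat the four terms of $\cF_{\varepsilon_k}[u_k]$ separately. The linear term is immediate: $f_{\varepsilon_k}\to f_0$ strongly in $L^2(\Omega)$ by Remark~\ref{bdd-f-ep} and $u_k\to u$ strongly in $L^2(\Omega)$, so $\int_\Omega f_{\varepsilon_k}u_k\,dx\to\int_\Omega f_0u\,dx$. The zeroth-order term in fact converges fully: writing $c_{\varepsilon_k}u_k^2-c_0u^2=c_{\varepsilon_k}(u_k^2-u^2)+(c_{\varepsilon_k}-c_0)u^2$, the first piece is bounded by $\|c_{\varepsilon_k}\|_{L^\infty(\Omega)}\,\|u_k^2-u^2\|_{L^1(\Omega)}\to0$ (since $u_k\to u$ in $L^2(\Omega)$ implies $u_k^2\to u^2$ in $L^1(\Omega)$) and the second tends to $0$ by dominated convergence, using $c_{\varepsilon_k}\to c_0$ a.e.\ with a uniform $L^\infty$ bound; hence $\int_\Omega c_{\varepsilon_k}u_k^2\,dx\to\int_\Omega c_0u^2\,dx$. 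For the boundary term I would first check the hypothesis of Lemma~\ref{trace-conv}: $\|u_k\|_{\phi_{\varepsilon_k}}^2\le\|u_k\|_{H^1(\Omega)}^2$ (as $0\le\phi_{\varepsilon_k}\le1$) and $\|u_k\|_{\delta_{\varepsilon_k}}^2\le C_0^2\|u_k\|_{H^1(\Omega)}^2$ by Lemma~\ref{trace-ineq}, both bounded. Lemma~\ref{trace-conv} then gives, along a further subsequence, a function $\bar w\in H^1(\Omega_1)$ with $\restr{u_k}{\Omega_1}\rightharpoonup\bar w$ weakly in $H^1(\Omega_1)$ and $\int_\Omega gu_k|\nabla\phi_{\varepsilon_k}|\,dx\to\int_{\partial\Omega_1}g\bar w\,dS$; since $\restr{u_k}{\Omega_1}\to\restr{u}{\Omega_1}$ in $L^2(\Omega_1)$, uniqueness of weak limits forces $\bar w=\restr{u}{\Omega_1}$, whose trace agrees with that of $u\in H^1(\Omega)$, so this term converges to $\int_{\partial\Omega_1}gu\,dS$.

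The one genuinely delicate point is the gradient term, because the weight $D_{\varepsilon_k}$ depends on $k$, so weak lower semicontinuity cannot be applied directly to $\int_\Omega D_{\varepsilon_k}|\nabla u_k|^2\,dx$. The idea is to absorb the weight into the sequence: set $a_k:=\sqrt{D_{\varepsilon_k}}$, which satisfies $\sqrt{\min\{\alpha,1\}}\le a_k\le\sqrt{\max\{\alpha,1\}}$ and $a_k\to\sqrt{D_0}$ a.e.\ in $\Omega$. Then $a_k\nabla u_k\rightharpoonup\sqrt{D_0}\,\nabla u$ weakly in $L^2(\Omega;\R^n)$: for $\psi\in L^2(\Omega;\R^n)$, split $\int_\Omega a_k\nabla u_k\cdot\psi\,dx-\int_\Omega\sqrt{D_0}\,\nabla u\cdot\psi\,dx=\int_\Omega\nabla u_k\cdot(a_k-\sqrt{D_0})\psi\,dx+\int_\Omega(\nabla u_k-\nabla u)\cdot\sqrt{D_0}\,\psi\,dx$; the second integral vanishes since $\nabla u_k\rightharpoonup\nabla u$ weakly and $\sqrt{D_0}\,\psi\in L^2$, while the first is bounded by $\|\nabla u_k\|_{L^2(\Omega)}\,\|(a_k-\sqrt{D_0})\psi\|_{L^2(\Omega)}\to0$ by dominated convergence. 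Weak lower semicontinuity of the $L^2$-norm then yields $\liminf_k\int_\Omega D_{\varepsilon_k}|\nabla u_k|^2\,dx\ge\int_\Omega D_0|\nabla u|^2\,dx$. Combining the four estimates via $\liminf(A_k+B_k)\ge\liminf A_k+\lim B_k$ (valid since the $B_k$-part converges) gives $L\ge\cF_0[u]$, which is \eqref{liminf}. The $\sqrt{D_{\varepsilon_k}}$-absorption trick is the crux; all remaining steps are dominated convergence together with the already-cited Lemmas~\ref{trace-conv} and \ref{trace-ineq}.
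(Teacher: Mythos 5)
Your proposal is correct and follows essentially the same route as the paper: reduce to a bounded-energy subsequence via the compactness theorem, pass to the limit in the $f$-, $c$-, and boundary terms (the last via Lemmas~\ref{trace-ineq} and~\ref{trace-conv}), and handle the gradient term by absorbing $\sqrt{D_{\varepsilon_k}}$ into the sequence and invoking weak lower semicontinuity of the $L^2$-norm. The only cosmetic difference is that you establish $\sqrt{D_{\varepsilon_k}}\nabla u_k\rightharpoonup\sqrt{D_0}\,\nabla u$ by testing directly against $\psi\in L^2(\Omega;\R^n)$, whereas the paper first extracts a weak $L^2$ limit $\Psi$ and identifies it through weak $L^1$ convergence; both arguments are valid and interchangeable.
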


    \begin{proof}	
If $\liminf_{k \to \infty} \cF_{\varepsilon_k}[u_k] = \infty$, then \qref{liminf} is trivial. 
Therefore, we only need to consider the case where $\liminf_{k \to \infty} \cF_{\varepsilon_k}[u_k] = L < \infty$.  
Since $u_k \to u$ strongly in $L^2(\Omega)$ as $k \to \infty$, 
there exists a subsquence $\{ k_j \}$ of $\{ k \}$ 
such that $u_{k_j} \to u$ a.e. in $\Omega$ 
and $\cF_{\varepsilon_{k_j}}[u_{k_j}] \to L$ as $j \to \infty$.  
Hence, there exists $j_1 > 0$ such that $\cF_{\varepsilon_{k_j}}[u_{k_j}] < L+1$, for all $j \geq j_1$. 
Consequently, $u_{k_j} \in H^1(\Omega)$, for all $j \geq j_1$. 
Moreover, by the compactness result in Theorem~\ref{compactness}, there exist a further subsequence of $\{ u_{k_j} \}_{j=j_1}^\infty$ (not relabeled)  
and a function $v \in H^1(\Omega)$ such that
\begin{align*}
	u_{k_j} &\to v \text{ strongly in } L^2(\Omega), \\
	u_{k_j} &\rightharpoonup v \text{ weakly in } H^1(\Omega), \\
	u_{k_j} &\to v \text{ a.e. in } \Omega,
\end{align*}
as $j \to \infty$. 
Since $u_{k_j} \to u$ a.e. in $\Omega$ as $j \to \infty$, 
we have  $u = v$ a.e. in $\Omega$, 
which implies that $u \in H^1(\Omega)$ 
and $u_{k_j} \rightharpoonup u$ weakly in $H^1(\Omega)$ as $j \to \infty$.
Therefore, without loss of generality, we assume the following:
\begin{itemize}
	\item[(A1)] $\{ u_k \}_{k=1}^\infty \subset H^1(\Omega)$, which implies that
	\begin{align*}
		\cF_{\varepsilon_k} [u_k] = \int_\Omega \left[ \frac{1}{2} (D_{\varepsilon_k} |\nabla u_k|^2 + c_{\varepsilon_k} u_k^2) - f_{\varepsilon_k} u_k + g u_k |\nabla\phi_{\varepsilon_k}| \right] dx,
	\end{align*}
    for all $k = 1,2,\ldots$

	\item[(A2)] $\lim_{k \to \infty} \cF_{\varepsilon_k}[u_k] = L < \infty$, 
	and $\cF_{\varepsilon_k}[u_k] < L+1$, for all $k = 1,2,\ldots$ 
	
	\item[(A3)] $u \in H^1(\Omega)$, 
	which implies that
	\begin{align*}
		\cF_0[u] = \int_{\Omega} \left[ \frac{1}{2}(D_0 |\nabla u|^2 + c_0 u^2) - f_0 u \right] dx + \int_{\partial\Omega_1} gu\; dS.
	\end{align*}
	
	\item[(A4)] $u_k \to u$ strongly in $L^2(\Omega)$ and a.e. in $\Omega$, 
	and	$u_k \rightharpoonup u$ weakly in $H^1(\Omega)$, as $k \to \infty$. 
\end{itemize}

\

Under Assumptions (A1)--(A4), we will prove the liminf inequality \qref{liminf}. 
Firstly, since $u^2 \in L^1(\Omega)$, by Lemma~\ref{diff-conv}, we have $\int_\Omega c_{\varepsilon_k} u^2 dx \to \int_{\Omega} c_0 u^2 dx$ as $k \to \infty$. 
Hence,
\begin{align}\label{liminf-1a}
	&\left|\int_\Omega c_{\varepsilon_k}u_k^2 - \int_{\Omega} c_0 u^2 \;dx \right|  \nonumber \\
	&\leq \left|\int_\Omega c_{\varepsilon_k} (u_k^2 - u^2)\; dx \right| + \left|\int_\Omega c_{\varepsilon_k}u^2 - \int_{\Omega} c_0 u^2 \;dx \right| \nonumber \\
	&\leq \max \{\beta, \gamma\} \left|\int_\Omega (u_k^2 - u^2) \;dx \right| + \left| \int_\Omega c_{\varepsilon_k}u^2 - \int_{\Omega} c_0 u^2 \;dx \right|
	\to 0,
\end{align}
as $k \to \infty$, 
which implies that
\begin{align}\label{liminf-1}
	\lim_{k \to \infty} \int_\Omega c_{\varepsilon_k} u_k^2 \;dx = \int_{\Omega} c_0 u^2 \;dx.
\end{align}

Secondly, by Lemma~\ref{trace-ineq}, there exists a constant $C_0 > 0$, depending only on $\Omega, \Omega_1$ and the dimension $n$, such that, 
for any $k = 1,2,\ldots$ and any $w \in H^1(\Omega)$, we have
\begin{align}\label{trace-ineq1}
	\|w\|_{\delta_{\varepsilon_k}} \leq C_0 \| w \|_{H^1(\Omega)}.
\end{align}
Since $\cF_{\varepsilon_k}[u_k] < L+1$ for any $k = 1,2,\ldots$, we can apply a similar argument as in the proof of Theorem~\ref{compactness} for \qref{est-4} to obtain
\begin{align}\label{bnd-uk-phi-k}
	\| u_k \|_{H^1(\Omega)}^2 \leq \frac{4}{\omega} \left( L + 1 + \frac{(C_0^2 + 1) M_1}{\omega} \right),
\end{align}
for any $k = 1,2,\ldots$, where $\omega = \min \{ \alpha, \beta, \gamma, 1 \}$ and 
$M_1 = 2\| h \|_{L^2(\Omega)}^2 + 2\| q  \|_{L^2(\Omega)}^2 + \| g \|_{H^1(\Omega)}^2$. 
Combining \qref{trace-ineq1} with \qref{bnd-uk-phi-k}, 
and using the fact that $0 < \phi_{\varepsilon_k} < 1$, 
we obtain
\begin{align}
	\|u_k\|_{\phi_{\varepsilon_k}}^2 + \|u_k\|_{\delta_{\varepsilon_k}}^2 &\leq (1 + C_0^2) \|u_k \|_{H^1(\Omega)}^2 \nonumber \\
	&\leq (1 + C_0^2)\frac{4}{\omega} \left( L + 1 + \frac{(C_0^2 + 1) M_1}{\omega} \right),
\end{align}
for any  $k = 1,2,\ldots$ 
Hence, by Lemma~\ref{trace-conv}, there exist a subsequence of $\{ u_k \}$ (not relabeled) 
and a function $\bar{u} \in H^1(\Omega_1)$ such that 
$\restr{u_k}{\Omega_1} \rightharpoonup \bar{u}$ weakly in $H^1(\Omega_1)$ as $k \to \infty$, and
\begin{align}
	\lim_{k \to \infty} \int_\Omega gu_k  |\nabla\phi_\varepsilon|\; dx = \int_{\partial\Omega_1} g \bar{u} \;dS.
\end{align}
Since $u_k \rightharpoonup u$ weakly in $H^1(\Omega)$ as $k \to \infty$, 
we have $\restr{u}{\Omega_1} = \bar{u}$ a.e. in $\Omega_1$, 
which implies that
\begin{align}\label{liminf-4}
	\lim_{k \to \infty} \int_\Omega g u_k |\nabla\phi_{\varepsilon_k}|\; dx = \int_{\partial\Omega_1} gu\; dS.
\end{align}

Thirdly, using \qref{bnd-uk-phi-k}, we have
\begin{align}
	\int_{\Omega_1} D_{\varepsilon_k} |\nabla u_k|^2\; dx &\leq \max \{\alpha, 1\} \| u_k \|_{H^1(\Omega)}^2 \nonumber \\
	&\leq \max \{\alpha, 1\} \frac{4}{\omega} \left( L + 1 + \frac{(C_0^2 + 1) M_1}{\omega} \right),
\end{align}
for any $k = 1,2,\ldots$ 
Hence, there exist a subsequence of $\{ u_k \}_{k=1}^\infty$ (not relabeled) 
and a function $\Psi \in L^2(\Omega; \R^n)$ such that
\begin{align}\label{sqrt-phik-grad-uk-conv-1}
	\sqrt{D_{\varepsilon_k}}\nabla u_k  \rightharpoonup \Psi \text{ weakly in } L^2(\Omega; \R^n) \text{ as } k \to \infty.
\end{align}
On the other hand, since 
\begin{align}
	\left|\sqrt{D_{\varepsilon_k}} - \sqrt{D_0} \right|^2 \leq 2 (D_{\varepsilon_k} + D_0) 
	\leq 4 \max \{\alpha, 1\},
\end{align}
for any $k = 1,2,\ldots$, 
and $\left|\sqrt{D_{\varepsilon_k}} - \sqrt{D_0} \right|^2 \to 0$ a.e. in $\Omega$ as $k \to \infty$, 
by the Bounded Convergence Theorem, we have
\begin{align}
	\lim_{k \to \infty} \int_{\Omega} \left|\sqrt{D_{\varepsilon_k}} - \sqrt{D_0} \right|^2 \;dx = 0,
\end{align}
which implies that $\sqrt{D_{\varepsilon_k}} \to \sqrt{D_0} $ strongly in $L^2(\Omega)$ as $k \to \infty$.
And since $\nabla u_k \rightharpoonup \nabla u$ weakly in $L^2(\Omega; \R^n)$ as $k \to \infty$, we get
\begin{align}\label{sqrt-phik-grad-uk-conv-2}
	\sqrt{D_{\varepsilon_k}}\nabla u_k \rightharpoonup \sqrt{D_0} \nabla u \text{ weakly in } L^1(\Omega; \R^n) \text{ as } k \to \infty.
\end{align}
Combining \qref{sqrt-phik-grad-uk-conv-1} and \qref{sqrt-phik-grad-uk-conv-2}, we have $\Psi = \sqrt{D_0} \nabla u$ a.e. in $\Omega$, 
which implies that
\begin{align}
	\sqrt{D_{\varepsilon_k}}\nabla u_k \rightharpoonup \sqrt{D_0} \nabla u \text{ weakly in } L^2(\Omega; \R^n) \text{ as } k \to \infty.
\end{align}
Hence, by the lower semicontinuity of norms, we have
\begin{align}\label{liminf-2}
	\int_{\Omega} D_0 |\nabla u|^2 \;dx \leq \liminf_{k \to \infty} \int_{\Omega} D_{\varepsilon_k} |\nabla u_k|^2 \;dx.
\end{align}
Finally, applying Remark \ref{bdd-f-ep}, since $f_{\varepsilon_k} \to f_0$ and $u_{\varepsilon_k} \to u$ strongly in $L^{2}(\Omega)$ as $k \to \infty$, we obtain 
\begin{align}\label{liminf-3}
	\lim_{k \to \infty} \int_\Omega f_{\varepsilon_k} u_k \;dx = \int_{\Omega} f_0 u \;dx.
\end{align}
Combining \qref{liminf-1}, \qref{liminf-4}, \qref{liminf-2} and \qref{liminf-3}, we obtain
\begin{align}
	\liminf_{k \to \infty} \cF_{\varepsilon_k}[u_k] 
	&= \liminf_{k \to \infty} \int_\Omega \left[ \frac{1}{2} (D_{\varepsilon_k} |\nabla u_k|^2 + c_{\varepsilon_k} u_k^2) - f_{\varepsilon_k} u_k + g u_k |\nabla\phi_{\varepsilon_k}| \right] \;dx \nonumber \\
	&\geq \liminf_{k \to \infty} \int_\Omega \frac{1}{2} D_{\varepsilon_k} |\nabla u_k|^2 \;dx + \int_{\Omega} \frac{1}{2} c_0 u^2 \;dx - \int_{\Omega} f_0 u \;dx  + \int_{\partial\Omega_1} gu \;dS \nonumber \\
	&\geq \int_{\Omega} \frac{1}{2} D_0 |\nabla u|^2 \;dx + \int_{\Omega} \frac{1}{2} c_0 u^2\; dx - \int_{\Omega} f_0 u \;dx  + \int_{\partial\Omega_1} gu\; dS \nonumber \\
	&\geq \cF_0[u].
\end{align}
Theorem~\ref{liminf-ineq} is established.
\end{proof}

\subsection{Limsup Inequality and Recovery Sequence}\label{sec:limsup-ineq}
In this section, we prove the following Theorem~\ref{limsup-ineq}, 
which establishes the existence of a recovery sequence.

\begin{theorem}[Limsup Inequality]\label{limsup-ineq}
	Let $\{ \varepsilon_k \} \subset (0,1)$ be a sequence of numbers such that $\varepsilon_k \searrow 0$ as $k \to \infty$. 
	For any funtion $u \in L^2(\Omega)$, there exists a sequence $\{ u_k \} \subset L^2(\Omega)$ such that $u_k \to u$ strongly in $L^2(\Omega)$ as $k \to \infty$, and
	\begin{align*}
		\limsup_{k \to \infty} \cF_{\varepsilon_k}[u_k] \leq \cF_0[u].
	\end{align*}
\end{theorem}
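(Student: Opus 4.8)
The plan is to take the \emph{constant} sequence $u_k = u$ as the recovery sequence whenever $u \in H^1(\Omega)$ (and anything, e.g.\ $u_k=0$, when $u \in L^2(\Omega)\setminus H^1(\Omega)$, since then $\cF_0[u]=\infty$ and there is nothing to prove). With $u_k = u$, the strong $L^2(\Omega)$ convergence $u_k \to u$ is immediate, so the entire content is to show $\limsup_{k\to\infty}\cF_{\varepsilon_k}[u] \le \cF_0[u]$. In fact I expect to prove the stronger statement that $\lim_{k\to\infty}\cF_{\varepsilon_k}[u] = \cF_0[u]$, i.e.\ termwise convergence of each of the four pieces of the energy, so that no recovery modification near $\partial\Omega_1$ is needed. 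This is the crucial simplification afforded by the Neumann case $\kappa=0$: unlike the boundary-layer constructions typical in $\Gamma$-convergence for phase-field functionals, here the diffuse coefficients $D_\varepsilon, c_\varepsilon, f_\varepsilon$ converge \emph{strongly} (Remark~\ref{bdd-f-ep}) and the only genuinely singular term, $\int_\Omega g u |\nabla\phi_\varepsilon|\,dx$, converges to the correct surface integral by Lemma~\ref{diff-conv-bdry} without any interface correction.

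The key steps, carried out termwise with $u_k = u \in H^1(\Omega)$ fixed, are as follows. First, $\int_\Omega D_{\varepsilon_k}|\nabla u|^2\,dx \to \int_\Omega D_0|\nabla u|^2\,dx$ by \eqref{lim-D-ep} of Lemma~\ref{diff-conv}, applied with $w = |\nabla u| \in L^2(\Omega)$ (more precisely, apply \eqref{lim-D-ep} componentwise or to $w=|\nabla u|$). Second, $\int_\Omega c_{\varepsilon_k} u^2\,dx \to \int_\Omega c_0 u^2\,dx$ by \eqref{lim-c-ep} with $w = u \in L^2(\Omega)$. Third, $\int_\Omega f_{\varepsilon_k} u\,dx \to \int_\Omega f_0 u\,dx$ by \eqref{lim-f-ep}, or directly from $f_{\varepsilon_k} \to f_0$ strongly in $L^2(\Omega)$ (Remark~\ref{bdd-f-ep}) paired with $u \in L^2(\Omega)$. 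Fourth, and this is the only term touching the interface, $\int_\Omega g u\,|\nabla\phi_{\varepsilon_k}|\,dx \to \int_{\partial\Omega_1} g u \, dS$ by Lemma~\ref{diff-conv-bdry}, applied to the function $w = gu$; the hypothesis to check is that $gu \in W^{1,1}(\Omega)$, which follows from $g \in H^1(\Omega)$ and $u \in H^1(\Omega)$ via the product rule, since $\nabla(gu) = u\nabla g + g\nabla u \in L^1(\Omega)$ by Cauchy--Schwarz (each summand is a product of two $L^2(\Omega)$ functions). Adding the four limits with the correct signs and factors of $\tfrac12$ yields $\cF_{\varepsilon_k}[u] \to \cF_0[u]$, hence the limsup inequality.

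The main obstacle, such as it is, is the fourth step: one must verify the integrability $gu \in W^{1,1}(\Omega)$ so that Lemma~\ref{diff-conv-bdry} is applicable, and one should note that the trace of $gu$ on $\partial\Omega_1$ appearing on the right-hand side of that lemma coincides with (trace of $g$)$\cdot$(trace of $u$), matching the boundary term $\int_{\partial\Omega_1} gu\,dS$ in $\cF_0[u]$ — this is standard since both $g$ and $u$ are in $H^1(\Omega)$ with well-defined $H^{1/2}(\partial\Omega_1)$ traces and the product of traces equals the trace of the product for $H^1$ functions. Everything else is a direct invocation of the preliminary lemmas. I would close by remarking that, since both Theorem~\ref{liminf-ineq} and Theorem~\ref{limsup-ineq} hold along every sequence $\varepsilon_k \searrow 0$, the $\Gamma$-convergence $\cF_\varepsilon \xrightarrow{\Gamma} \cF_0$ in the strong $L^2(\Omega)$-topology follows, completing the proof of Theorem~\ref{thm-main-2}.
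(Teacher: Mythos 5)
Your proposal is correct and follows essentially the same route as the paper: the constant recovery sequence $u_k = u$, termwise convergence of the three volume terms via Lemma~\ref{diff-conv} (applied to $w=|\nabla u|$ and $w=u$) and of the interface term via Lemma~\ref{diff-conv-bdry} applied to $gu \in W^{1,1}(\Omega)$. Your added remarks on verifying $gu \in W^{1,1}(\Omega)$ by Cauchy--Schwarz and on the trace of the product are consistent with, and slightly more explicit than, the paper's argument.
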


    \begin{proof}	
In the case $u \in L^2(\Omega)$ with $\cF_0[u] = \infty$, any sequence $\{ u_k \} \subset L^2(\Omega)$ that converges strongly to $u$ in $L^2(\Omega)$ can serve as a recovery sequence. 
For simplicity, we choose $u_k = u$, for all $k = 1,2,\ldots$, 
and it is trivial to get 
$\limsup_{k \to \infty} \cF_{\varepsilon_k}[u_k] \leq \cF_0[u]$.  
Now, assume that $u \in L^2(\Omega)$ and $\cF_0[u] < \infty$, 
then $u \in H^1(\Omega)$ and 
\begin{align*}
	\cF_0[u] = \int_{\Omega} \left[ \frac{1}{2}(D_0 |\nabla u|^2 + c_0 u^2) - f_0 u \right] \;dx + \int_{\partial\Omega_1} gu\; dS.
\end{align*}
Choose $u_k = u$, for all $k = 1,2,\ldots$, then $u_k \in H^1(\Omega)$, which implies that 
\begin{align*}
	\cF_{\varepsilon_k} [u] = \int_\Omega \left[ \frac{1}{2} (D_{\varepsilon_k} |\nabla u_k|^2 + c_{\varepsilon_k} u_k^2) - f_{\varepsilon_k} u_k + g u_k |\nabla\phi_{\varepsilon_k}| \right]\; dx,
\end{align*}
for all $k = 1,2,\ldots$ 
By Lemma~\ref{diff-conv}, we have
\begin{align} 
	\lim_{k\to \infty}\int_{\Omega} D_{\varepsilon_k} |\nabla u_k|^2 \;dx &= \lim_{k\to \infty}\int_{\Omega} D_{\varepsilon_k} |\nabla u|^2\; dx = \int_{\Omega} D_0 |\nabla u|^2 \;dx, \\ 		
	\lim_{k\to \infty}\int_{\Omega} c_{\varepsilon_k}  u_k^2\; dx &= \lim_{k\to \infty}\int_{\Omega} c_{\varepsilon_k}  u^2 \;dx = \int_{\Omega} c_0 u^2 \;dx, \\
	\lim_{k\to \infty}\int_{\Omega} f_{\varepsilon_k}  u_k \;dx &= \lim_{k\to \infty}\int_{\Omega} f_{\varepsilon_k}  u \;dx = \int_{\Omega} f_0 u \;dx. 
\end{align}
Moreover, since $gu \in W^{1,1}(\Omega)$, by Lemma~\ref{diff-conv-bdry}, we have
\begin{align}
	\lim_{k\to \infty} \int_\Omega g u_k |\nabla\phi_{\varepsilon_k}| \;dx = \lim_{k\to \infty} \int_\Omega g u |\nabla\phi_{\varepsilon_k}|\; dx = \int_{\partial\Omega_1} gu \;dS.
\end{align}
Thus, we obtain
\begin{align}
	\lim_{k \to \infty} \cF_{\varepsilon_k}[u_k] 
	&= \lim_{k \to \infty} \int_\Omega \left[ \frac{1}{2} (D_{\varepsilon_k} |\nabla u_k|^2 + c_{\varepsilon_k} u_k^2) - f_{\varepsilon_k} u_k + g u_k |\nabla\phi_{\varepsilon_k}| \right]\; dx \nonumber \\
	&= \int_{\Omega} \left[ \frac{1}{2}(D_0 |\nabla u|^2 + c_0 u^2) - f_0 u \right] \;dx + \int_{\partial\Omega_1} gu\; dS \nonumber \\
	&= \cF_0[u],
\end{align}
which implies that $\{ u_k \}$ is a recovery sequence for $u$.
\end{proof}

\section{Proof of Theorem~\ref{thm-main-3}: Strong convergence of $u_\varepsilon$ in $H^1(\Omega)$}\label{sec:proof-thm-3}
In this section, we prove Theorem~\ref{thm-main-3}. 
To do that, we let $\kappa = 0$ and take an arbitrary sequence of numbers $\{ \varepsilon_k \} \subset (0,1)$ such that $\varepsilon_k \searrow 0$ as $k \to \infty$. 
For each $k = 1,2,\ldots$, let $u_{\varepsilon_k} \in H^1(\Omega)$ be the solution to the diffuse domain problem
\begin{align}
	-\nabla \cdot (D_{\varepsilon_k} \nabla u_{\varepsilon_k}) + c_{\varepsilon_k} u_{\varepsilon_k} + g|\nabla\phi_{\varepsilon_k}| &= f_{\varepsilon_k}, \quad \text{in } \Omega,  
	\\
	D_{\varepsilon_k} \nabla u_{\varepsilon_k} \cdot \boldsymbol{n}_2 & = 0, \quad \text{on } \partial\Omega.
\end{align}
Hence, $u_{\varepsilon_k}$ satisfies the weak formulation
\begin{align}\label{weak-diff-eqn}
	\int_\Omega (D_{\varepsilon_k} \nabla u_{\varepsilon_k} \cdot \nabla v  + c_{\varepsilon_k} u_{\varepsilon_k}v + g|\nabla\phi_{\varepsilon_k}|v) \; dx =  \int_\Omega f_{\varepsilon_k}v \; dx,
\end{align}
for any $v \in H^1(\Omega)$. 
Moreover, $u_{\varepsilon_k}$ is the unique minimizer of $\cF_{\varepsilon_k}$, that is,
\begin{align*}
	\cF_{\varepsilon_k}[u_{\varepsilon_k}] = \min \{ \cF_{\varepsilon_k}[u] : u \in L^2(\Omega) \} < \infty.
\end{align*}
Let $u_0$ be the solution to the two-sided problem \qref{bvp1}--\qref{bvp5} for $\kappa = 0$, that is
\begin{align*}
	u_0(x) = \begin{cases}
		u_1(x),  &\text{if } x \in \Omega_1, \\
		u_2(x),  &\text{if } x \in \Omega_2,
	\end{cases}
\end{align*}
where $u_1:\Omega_1 \to \R$ and $u_2:\Omega_2 \to \R$ satisfy
    \begin{align}
-\Delta u_1 + \gamma u_1 &= q, \quad \text{in } \Omega_1,  
    \\
-\alpha\Delta u_2 + \beta u_2 &= h, \quad \text{in } \Omega_2, 
    \\
 u_1 &= u_2, \quad \text{on } \partial\Omega_1,  
    \\
- \nabla (u_1 - \alpha u_2) \cdot \boldsymbol{n}_1 & = g, \quad \text{on } \partial\Omega_1, 
    \\
 \alpha \nabla u_2 \cdot {\boldsymbol{n}_2}  & = 0, \quad \text{on } \partial\Omega.
    \end{align}
Then, $u_0$ is the unique minimizer of $\cF_0$, that is,
\begin{align*}
	\cF_0[u_0] = \min \{ \cF_0[u] : u \in L^2(\Omega) \} < \infty.
\end{align*}
Since $\cF_\varepsilon$ $\Gamma$--converges to $\cF_0$ as $\varepsilon \to 0$, 
by the Fundamental Theorem of $\Gamma$--convergence (see, e.g., \cite{Braides2014}, Theorem~2.1), we obtain that
\begin{align*}
	u_{\varepsilon_k} \to u_0 \text{ strongly in } L^2(\Omega) \text{ as } k \to \infty, 
\end{align*}
and
\begin{align}\label{lim-Fk(uk)}
	\lim_{k \to \infty} \cF_{\varepsilon_k}[u_{\varepsilon_k}] = \cF_0[u_0].
\end{align}
By \qref{lim-Fk(uk)}, there exists a number $k_1 > 0$ such that $\cF_{\varepsilon_k}[u_{\varepsilon_k}] < \cF_0[u_0] + 1  < \infty$, for any $k \geq k_1$, 
which implies that $\{u_{\varepsilon_{k}}\}_{k=k_1}^\infty$ is bounded in $H^{1}(\Omega)$. 
By the compactness result in Theorem~\ref{compactness}, there exist a subsequence of $\{ u_{\varepsilon_k} \}_{k=k_1}^\infty$, 
which is relabeled as $\{ u_{\varepsilon_k} \}_{k=1}^\infty$, 
and a function $\tilde{u} \in H^1(\Omega)$, such that 
\begin{align*}
	u_{\varepsilon_k} &\to \tilde{u} \text{ strongly in } L^2(\Omega), \\
	u_{\varepsilon_k} &\rightharpoonup \tilde{u} \text{ weakly in } H^1(\Omega), \\
	u_{\varepsilon_k} &\to \tilde{u} \text{ a.e. in } \Omega,
\end{align*}
as $k \to \infty$. 
Hence, by the uniqueness of limits, $\tilde{u} = u_0$ a.e. in $\Omega$, 
which implies that 
\begin{align}\label{uk-weak-conv}
	u_{\varepsilon_k} \rightharpoonup u_0 \text{ weakly in } H^1(\Omega) \text{ as } k \to \infty.	
\end{align}

For each $k = 1,2,\ldots$, since $u_{\varepsilon_k} - u_0$ is an admissible test function for \qref{weak-diff-eqn}, we have
\begin{align}
	&\int_\Omega \left( D_{\varepsilon_k} \nabla u_{\varepsilon_k} \cdot (\nabla u_{\varepsilon_k} - \nabla u_0)  + c_{\varepsilon_k} u_{\varepsilon_k} (u_{\varepsilon_k} - u_0) + g |\nabla\phi_{\varepsilon_k}|(u_{\varepsilon_k} - u_0) \right) \; dx  \nonumber \\
    &=  \int_\Omega f_{\varepsilon_k} (u_{\varepsilon_k} - u_0) \; dx,
\end{align}
which implies that
\begin{align}\label{norm-est}
	&\int_\Omega (D_{\varepsilon_k} |\nabla u_{\varepsilon_k} - \nabla u_0|^2 + c_{\varepsilon_k} |u_{\varepsilon_k} - u_0|^2) \; dx \nonumber \\
	&= \int_\Omega (f_{\varepsilon_k} - g|\nabla\phi_{\varepsilon_k}|) (u_{\varepsilon_k} - u_0) \; dx 
	+ \int_\Omega D_{\varepsilon_k}(|\nabla u_0|^2 - \nabla u_{\varepsilon_k} \cdot \nabla u_0  )\; dx  \nonumber \\
    &+ \int_\Omega c_{\varepsilon_k} (u_0^2 -  u_{\varepsilon_k}u_0) \; dx.
\end{align}

For the first term on the right hand side of \qref{norm-est}, since $u_0, g \in H^1(\Omega)$, applying Lemmas~\ref{diff-conv-bdry} and \ref{diff-conv}, we get
\begin{align}
	\int_\Omega (f_{\varepsilon_k} - g|\nabla\phi_{\varepsilon_k}|) u_0 \; dx \to \int_\Omega f_0 u_0 \; dx - \int_{\partial\Omega_1} gu_0 \; dS,
\end{align}
as $k \to \infty$. 
On the other hand, by applying arguments analogous to those used for \qref{liminf-4} and \qref{liminf-3} in the proof of Theorem~\ref{compactness}, we obtain
\begin{align}
	\int_\Omega (f_{\varepsilon_k} - g|\nabla\phi_{\varepsilon_k}|) u_{\varepsilon_k} \; dx \to \int_\Omega f_0 u_0 \; dx - \int_{\partial\Omega_1} gu_0 \; dS,
\end{align}
as $k \to \infty$. 
Therefore,
\begin{align}\label{norm-est-1}
	\int_\Omega (f_{\varepsilon_k} - g|\nabla\phi_{\varepsilon_k}|) (u_{\varepsilon_k} - u_0) \; dx \to 0  \quad \text{as } k \to \infty.
\end{align}

Now we examine the second term on the right hand side of \qref{norm-est}. 
Since $\nabla u_0 \in L^2(\Omega)$, applying Lemma~\ref{diff-conv}, we get
\begin{align}\label{norm-est-2a}
	\int_\Omega D_{\varepsilon_k}|\nabla u_0|^2 \; dx \to \int_\Omega D_0|\nabla u_0|^2 \; dx, 
\end{align}
as $k \to \infty$. 
Furthermore, since $|D_{\varepsilon_k} \nabla u_0 - D_0 \nabla u_0|^2 \to 0$ a.e. in $\Omega$, 
$|D_{\varepsilon_k} \nabla u_0 - D_0 \nabla u_0|^2 \leq  (2\max\{ \alpha, 1 \} |\nabla u_0|)^2$, and $\nabla u_0 \in L^2(\Omega)$, 
applying the Dominated Convergence Theorem, we obtain
\begin{align}
    \|D_{\varepsilon_k} \nabla u_0 - D_0 \nabla u_0\|_{L^2(\Omega)}^2 = \int_\Omega |D_{\varepsilon_k} \nabla u_0 - D_0 \nabla u_0|^2 \; dx \to 0,
\end{align}
as $k \to \infty$, 
which implies that $D_{\varepsilon_k} \nabla u_0 \to D_0 \nabla u_0$ strongly in $L^{2}(\Omega)$ as $k \to \infty$. 
This convergence of $D_{\varepsilon_k} \nabla u_0$ and the weak convergence $\nabla u_{\varepsilon_{k}} \rightharpoonup \nabla u_0$ in $L^{2}(\Omega)$ together imply that 
\begin{align}\label{norm-est-2b}
	\int_\Omega   D_{\varepsilon_k}\nabla u_{\varepsilon_k} \cdot \nabla u_0  \; dx 
    &=\int_\Omega   \nabla u_{\varepsilon_k} \cdot D_{\varepsilon_k} \nabla u_0  \; dx  \nonumber \\
    &\to  \int_{\Omega}   \nabla u_0 \cdot D_0 \nabla u_0 \; dx
    = \int_\Omega D_0 |\nabla u_0|^2 \; dx,
\end{align}
as $k \to \infty$. 
Combining \qref{norm-est-2a} and \qref{norm-est-2b}, we get
\begin{align}\label{norm-est-2}
	\int_\Omega D_{\varepsilon_k}(|\nabla u_0|^2 -  \nabla u_{\varepsilon_k} \cdot \nabla u_0  ) \; dx \to 0  \quad \text{as } k \to \infty.
\end{align}
Applying a similar argument for the third term on the right hand side of \qref{norm-est}, we also obtain
\begin{align}\label{norm-est-3}
	\int_\Omega c_{\varepsilon_k}(u_0^2 - u_{\varepsilon_k} u_0)\; dx \to 0  \quad \text{as } k \to \infty.
\end{align}
Using \qref{norm-est-1}, \qref{norm-est-2} and \qref{norm-est-3}, 
taking limits on both sides of \qref{norm-est} as $k \to \infty$, 
and recalling that $\omega = \min \{ \alpha, \beta, \gamma, 1 \} > 0$, we have
\begin{align*}
	\| u_{\varepsilon_k} - u_0 \|_{H^1(\Omega)}^2 \leq 
	\frac{1}{\omega}\int_\Omega (D_{\varepsilon_k} |\nabla u_{\varepsilon_k} - \nabla u_0|^2 + c_{\varepsilon_k} |u_{\varepsilon_k} - u_0|^2) \; dx \to 0  \quad \text{as } k \to \infty, 
\end{align*}
which implies that $u_{\varepsilon_k}$ converges strongly to $u_0$ in $H^1(\Omega)$ as $k \to \infty$. 
Note that this sequence is merely a subsequence of the original sequence $\{u_{\varepsilon_k}\}$ chosen at the beginning of this section.

\begin{rema}
    If we impose the Robin boundary condition 
    \begin{align*}
        -\nabla (u_1 - \alpha u_2) \cdot \text{\textbf{n}}_1  = \kappa u_1 + g, \quad \text{on } \partial\Omega_1,
    \end{align*}    
    with $\kappa > 0$, 
    then establishing the liminf inequality poses a significant challenge due to the presence of the term 
    \begin{align*}
        \int_\Omega  \frac{1}{2}\kappa u_\varepsilon^2 |\nabla\phi_\varepsilon| dx,
	\end{align*}    
    which must be shown to converge, up to a subsequence, to
    \begin{align*}
        \int_{\partial\Omega_1} \frac{1}{2}\kappa u^2 dS,
	\end{align*}    
    as $\varepsilon \to 0$. 
    It is not clear how to show this convergence in general. 
    However, leveraging the Sobolev Embedding Theorem in 1D, we will demonstrate in Section~\ref{1D-case} that, this convergence does hold in 1D. 
    This, in turn, implies that the $\Gamma$--convergence and strong $H^1(\Omega)$--convergence results hold for the Robin boundary condition case in 1D.
\end{rema}

\section{$\Gamma$-convergence and Strong $H^1(\Omega)$--convergence in 1D}\label{1D-case}
In this section, we consider the two-sided problem with transmission-type Robin boundary conditions in 1D. 
We consider the two-sided problem \qref{bvp1}--\qref{bvp5} over the following domains in 1D:
\begin{align*}
    \Omega = (a, b), \quad \Omega_1 = (a_1, b_1), \quad \Omega_2 = (a, a_1) \cup (b_1, b), 
\end{align*}
where $a < a_1 < b_1 < b$. 
Additionally, we extend the definitions of $\cE_0$ and $\cE_\varepsilon$ from \qref{E(u)} and \qref{E-eps}, respectively, to all $u \in L^2(\Omega)$, by defining:
\begin{align}\label{F(u)-1D}
	\cF_0 [u] = 
	\begin{cases} 
        \cE_0 [u]  , &  u \in H^1(\Omega), 
		\\
		\infty, & u \in L^2(\Omega) \setminus H^1(\Omega), 
	\end{cases}
\end{align}
and
\begin{align}\label{F-eps-1D}
	\cF_\varepsilon [u] = 
	\begin{cases} 
        \cE_\varepsilon [u]  , &  u \in H^1(\Omega), 
		\\
		\infty, & u \in L^2(\Omega) \setminus H^1(\Omega). 
	\end{cases}
\end{align}
We will show that, in 1D, the $\Gamma$--convergence result (similar to that in Theorems~\ref{thm-main-2}) and the strong $H^1(\Omega)$--convergence result (similar to that in Theorems~\ref{thm-main-3}) hold for any $\kappa \geq 0$.

\subsection{$\Gamma$--convergence of the Energy Functional in 1D} \label{sec:proof-thm-2-1D}
We will show that Theorems~\ref{compactness}, \ref{liminf-ineq} and \ref{limsup-ineq} hold in 1D, for any $\kappa \geq 0$. 
We only demonstrate the estimates and convergences related to the term $\int_\Omega  \frac{1}{2}\kappa u_\varepsilon^2 |\nabla\phi_\varepsilon| dx$. 
The rest of the proofs will be similar to those in Section~\ref{sec:proof-thm-2}.

For the compactness result,  
using the same argument as in Section~\ref{sec:compactness},
combining with $\int_\Omega \frac{1}{2} \kappa u_k^2 |\nabla\phi_{\varepsilon_k}| dx \geq 0$, 
we obtain the same compactness result as in Theorem~\ref{compactness}. 

For the limsup inequality and recovery sequence, with the choice $u_k = u$, for all $k = 1,2,\ldots$, 
applying Lemma~\ref{diff-conv-bdry} for $u^2 \in W^{1,1}(\Omega)$, we get
\begin{align*}
	\lim_{k\to \infty} \int_\Omega \frac{1}{2} \kappa u_k^2 |\nabla\phi_{\varepsilon_k}| \;dx = \lim_{k\to \infty} \int_\Omega \frac{1}{2} \kappa u^2 |\nabla\phi_{\varepsilon_k}|\; dx = \int_{\partial\Omega_1} \frac{1}{2} \kappa u^2 \;dS.
\end{align*}
Then, using the same argument as in Section~\ref{sec:limsup-ineq}, $\{ u_k \}$ is a recovery sequence for $u$.

Now we handle the liminf inequality. 
Let $\{ \varepsilon_k \} \subset (0,1)$ be a sequence of numbers such that $\varepsilon_k \searrow 0$ as $k \to \infty$. 
Making assumptions similar to (A1)--(A4) in Section~\ref{sec:liminf-ineq}, but with $\cF_0$ and $\cF_\varepsilon$ defined by \qref{F(u)-1D} and \qref{F-eps-1D}, respectively, 
we only need to show that
\begin{align*}
	\lim_{k\to \infty} \int_\Omega \frac{1}{2} \kappa u_k^2 |\nabla\phi_{\varepsilon_k}| \;dx = \int_{\partial\Omega_1} \frac{1}{2} \kappa u^2 \;dS,
\end{align*}
up to a subsequence. 
The rest of the argument is similar to that in the proof of Theorem~\ref{liminf-ineq}. 

By Sobolev Embedding Theorem for $n=1$ (see, e.g., \cite{Evans2}, Section~5.6.3, Theorem~6), there exists a constant $K > 0$, depending only on $\Omega$, such that, for any $w \in H^1(\Omega)$,
\begin{align*}
    \| w \|_{C^{0, 1/2}(\overline{\Omega})} \leq K \| w\|_{H^1(\Omega)}.
\end{align*}
Combining the above estimate with \qref{bnd-uk-phi-k}, we have $u_k \in C^{0, 1/2}(\overline{\Omega})$ and
\begin{align*}
	\| u_k \|_{C^{0, 1/2}(\overline{\Omega})}^2 
    &\leq K^2 \| u_k \|_{H^1(\Omega)}^2 \\
    &\leq \frac{4K^2}{\omega} \left( L + 1 + \frac{(C_0^2 + 1) M_1}{\omega} \right) := M_2,
\end{align*}
for any $k = 1,2,\ldots$. 
This implies that, for each $k = 1,2,\ldots$, 
\begin{align*}
	\| u_k \|_{L^\infty(\Omega)}    \leq \sqrt{M_2},
\end{align*}
and
\begin{align*}
	| u_k(x) - u_k(y) |   \leq \sqrt{M_2} |x-y|^{1/2}, \quad \text{ for any } x, y \in \Omega.
\end{align*}
Therefore, $\{ u_k \}$ is uniformly bounded and equicontinuous. 
Hence, by the Arzel\`{a}-Ascoli Theorem (see, e.g., \cite{Evans2}, Appendix C.8), 
there exist a subsequence of $\{ u_k \}$ (not relabeled)
and a function $\bar{u} \in C(\overline{\Omega})$ such that $u_k$ converges uniformly to $\bar{u}$, as $k \to \infty$. 
Since  $u_k \to u$ a.e. in $\Omega$ as $k \to \infty$ (by Assumption (A4)), then $\bar{u} = u$ a.e. in $\Omega$. 
Hence, $u \in L^\infty (\Omega)$ and $u_k \to u$ strongly in $L^\infty (\Omega)$ as $k \to \infty$. 

Applying Lemma~\ref{diff-conv-bdry} for $u^2 \in W^{1,1}(\Omega)$, we get
\begin{align*}
	\lim_{k\to \infty} \int_\Omega \frac{1}{2} \kappa u^2 |\nabla\phi_{\varepsilon_k}|\; dx = \int_{\partial\Omega_1} \frac{1}{2} \kappa u^2 \;dS.
\end{align*}
Then, using $\int_\Omega |\nabla\phi_{\varepsilon_k}| dx = 1$, we obtain
\begin{align*}
	&\left| \int_\Omega \frac{1}{2} \kappa u_k^2 |\nabla\phi_{\varepsilon_k}|\; dx - \int_{\partial\Omega_1} \frac{1}{2} \kappa u^2 \;dS \right| \\
    &\leq \left|  \int_\Omega \frac{1}{2} \kappa (u_k^2 - u^2) |\nabla\phi_{\varepsilon_k}|\; dx \right|
    + \left| \int_\Omega \frac{1}{2} \kappa u^2 |\nabla\phi_{\varepsilon_k}|\; dx - \int_{\partial\Omega_1} \frac{1}{2} \kappa u^2 \;dS \right|    \\
    &\leq \frac{1}{2} \kappa \| u_k^2 - u^2 \|_{L^\infty (\Omega)} \int_\Omega |\nabla\phi_{\varepsilon_k}|\; dx  
    + \left| \int_\Omega \frac{1}{2} \kappa u^2 |\nabla\phi_{\varepsilon_k}|\; dx - \int_{\partial\Omega_1} \frac{1}{2} \kappa u^2 \;dS \right|    \\
    &\leq \frac{1}{2} \kappa \| u_k^2 - u^2 \|_{L^\infty (\Omega)} 
    + \left| \int_\Omega \frac{1}{2} \kappa u^2 |\nabla\phi_{\varepsilon_k}|\; dx - \int_{\partial\Omega_1} \frac{1}{2} \kappa u^2 \;dS \right|   
    \to 0 \quad \text{as } k \to \infty,
\end{align*}
which implies that
\begin{align*}
	\lim_{k\to \infty} \int_\Omega \frac{1}{2} \kappa u_k^2 |\nabla\phi_{\varepsilon_k}| \;dx = \int_{\partial\Omega_1} \frac{1}{2} \kappa u^2 \;dS.
\end{align*}
This completes the proof of the $\Gamma$--convergence result for the Robin boundary condition case in 1D.

\subsection{Strong $H^1(\Omega)$--convergence of the Approximation Solution in 1D}
We will show that Theorem~\ref{thm-main-3} holds in 1D, for any $\kappa \geq 0$. 
Choose an arbitrary sequence of numbers $\{ \varepsilon_k \} \subset (0,1)$ such that $\varepsilon_k \searrow 0$ as $k \to \infty$. 
For each $k = 1,2,\ldots$, let $u_{\varepsilon_k} \in H^1(\Omega)$ satisfy the weak formulation
\begin{align}\label{weak-diff-eqn-1D}
	\int_\Omega (D_{\varepsilon_k} \nabla u_{\varepsilon_k} \cdot \nabla v  + c_{\varepsilon_k} u_{\varepsilon_k}v + (\kappa u_{\varepsilon_k} + g)|\nabla\phi_{\varepsilon_k}|v) \; dx =  \int_\Omega f_{\varepsilon_k}v \; dx,
\end{align}
for any $v \in H^1(\Omega)$, 
then $u_{\varepsilon_k}$ is the unique minimizer of $\cF_{\varepsilon_k}$, defined by \qref{F-eps-1D}.
Let $u_0$ be the solution to the two-sided problem \qref{bvp1}--\qref{bvp5},
then $u_0$ is the unique minimizer of $\cF_0$, defined by \qref{F(u)-1D}. 
Using an argument similar to that in Section~\ref{sec:proof-thm-3}, there exists a subsequence of $\{ u_{\varepsilon_k} \}$ (not relabeled) that is bounded in $H^1(\Omega)$, such that
\begin{align*}
	u_{\varepsilon_k} &\to u_0 \text{ strongly in } L^2(\Omega), \\
	u_{\varepsilon_k} &\rightharpoonup u_0 \text{ weakly in } H^1(\Omega), \\
	u_{\varepsilon_k} &\to u_0 \text{ a.e. in } \Omega,
\end{align*}
as $k \to \infty$. 
Then, an argument similar to that in Section~\ref{sec:proof-thm-2-1D} gives that $\{ u_{\varepsilon_k} \}$ is bounded in $L^\infty(\Omega)$, that is,
\begin{align*}
	\| u_{\varepsilon_k} \|_{L^\infty(\Omega)}    \leq M_3, \quad \text{for all } k = 1,2,\ldots,
\end{align*}
for some $M_3 > 0$ independent of $k$, 
and $u_{\varepsilon_k} \to u$ strongly in $L^\infty (\Omega)$ as $k \to \infty$. 

Similar to the proof in Section~\ref{sec:proof-thm-3}, we use $u_{\varepsilon_k} - u_0$ as a test function for \qref{weak-diff-eqn-1D}, which implies that
\begin{align*}
	&\int_\Omega \left( D_{\varepsilon_k} \nabla u_{\varepsilon_k} \cdot (\nabla u_{\varepsilon_k} - \nabla u_0)  + c_{\varepsilon_k} u_{\varepsilon_k} (u_{\varepsilon_k} - u_0) + (\kappa u_{\varepsilon_k} + g) |\nabla\phi_{\varepsilon_k}|(u_{\varepsilon_k} - u_0) \right) \; dx  \nonumber \\
    &=  \int_\Omega f_{\varepsilon_k} (u_{\varepsilon_k} - u_0) \; dx.
\end{align*}
The equation above is equivalent to
\begin{align}\label{norm-est-1D}
	&\int_\Omega (D_{\varepsilon_k} |\nabla u_{\varepsilon_k} - \nabla u_0|^2 + c_{\varepsilon_k} |u_{\varepsilon_k} - u_0|^2) \; dx  \nonumber \\
	&= \int_\Omega (f_{\varepsilon_k} - g|\nabla\phi_{\varepsilon_k}|) (u_{\varepsilon_k} - u_0) \; dx 
	+ \int_\Omega D_{\varepsilon_k}(|\nabla u_0|^2 - \nabla u_{\varepsilon_k} \cdot \nabla u_0  )\; dx  \nonumber \\
    &+ \int_\Omega c_{\varepsilon_k} (u_0^2 -  u_{\varepsilon_k}u_0) \; dx + \int_\Omega \kappa u_{\varepsilon_k} (u_0 - u_{\varepsilon_k}) |\nabla\phi_{\varepsilon_k}| \; dx.
\end{align}
We only need to show that the last integral on the right-hand side of \qref{norm-est-1D} approaches 0 as $k \to \infty$. 
The rest of the proof is similar to that in Section~\ref{sec:proof-thm-3}. 

Since $\int_\Omega |\nabla\phi_{\varepsilon_k}| dx = 1$ 
and $u_{\varepsilon_k} \to u$ strongly in $L^\infty (\Omega)$ as $k \to \infty$, we have
\begin{align*}
    \left| \int_\Omega \kappa u_{\varepsilon_k} (u_0 - u_{\varepsilon_k}) |\nabla\phi_{\varepsilon_k}| \; dx \right| 
    &\leq \kappa \|u_{\varepsilon_k}\|_{L^\infty(\Omega)} \|u_0 - u_{\varepsilon_k}\|_{L^\infty(\Omega)} \int_\Omega |\nabla\phi_{\varepsilon_k}| \; dx \nonumber \\
    &\leq \kappa M_3 \|u_0 - u_{\varepsilon_k}\|_{L^\infty(\Omega)} 
    \to 0 \quad \text{as } k \to \infty, 
\end{align*}
which implies that
\begin{align*}
    \lim_{k \to \infty} \int_\Omega \kappa u_{\varepsilon_k} (u_0 - u_{\varepsilon_k}) |\nabla\phi_{\varepsilon_k}| \; dx = 0. 
\end{align*}
This completes the proof of the strong $H^1(\Omega)$--convergence result for the Robin boundary condition case in 1D.

    \section{Discussion}
    \label{sec:discuss}
In this work, we established the $\Gamma$--convergence for the diffuse domain energy functional and the strong $H^1(\Omega)$--convergence for the diffuse domain approximation solution.  
The two-sided problem is both theoretically interesting and practically relevant, as many physical processes exhibit different parameters on either side of an interface.
 
However, as we discussed in \cite{LuongDDM2025-1}, the one-sided problem is arguably more intriguing and serves as the primary motivation for diffuse domain methods. 
Some open questions arise: Can the $\Gamma$--convergence and $H^1(\Omega)$--convergence analyses be extended to cases where $\alpha$ depends on $m$, specifically,  $\alpha = \varepsilon^m$, for some constant $m > 0$? 
This change breaks the established $\Gamma$--convergence result, as the associated sharp interface energy functional $\cE_0$ over the whole cuboidal domain $\Omega$, for the one-sided problem, is still unknown. 
In particular, it seems, we need to use a limiting energy in $\Omega_2$ that is zero. 
Moreover, what happens to the solution $u_2$ in the exterior domain, $\Omega_2$, in this case?
We refer readers to \cite{LuongDDM2025-1} for further discussion and numerical experiments.

    \section*{Acknowledgments}

TM gratefully acknowledges funding from the US National Science Foundation via grant number NSF-DMS 2206252. TL is thankful for partial support from the Department of Mathematics at the University of Tennessee. SMW and MHW acknowledge generous support from the US National Science Foundation through grants NSF-DMS 2012634 and NSF-DMS 2309547. The authors thank John Lowengrub for helpful discussions about this topic.

    \bibliographystyle{plain}
    \bibliography{Luong-bib}

\begin{thebibliography}{10}

\bibitem{Abels-DDM2015}
H.~Abels, K.~F. Lam, and B.~Stinner.
\newblock Analysis of the diffuse domain approach for a bulk-surface coupled
  pde system.
\newblock {\em SIAM Journal on Mathematical Analysis}, 47(5):3687--3725, 2015.

\bibitem{Voigt-DDM2014}
S.~Aland, S.~Egerer, J.~Lowengrub, and A.~Voigt.
\newblock Diffuse interface models of locally inextensible vesicles in a
  viscous ﬂuid.
\newblock {\em J. Comput. Phys.}, 277(15):32--47, 2014.

\bibitem{Voigt-DDM2011-bio}
S.~Aland, C.~Landsberg, R.~M\''{u}ller, F.~Stenger, M.~Bobeth, A.~C.
  Langheinrich, and A.~Voigt.
\newblock Adaptive diffuse domain approach for calculating mechanically induced
  deformation of trabecular bone.
\newblock {\em Comput. Methods Biomech. Biomed. Eng.}, 17(1):31--38, 2011.

\bibitem{Voigt-DDM2010}
S.~Aland, J.~Lowengrub, and A.~Voigt.
\newblock Two-phase ﬂow in complex geometries: a diffuse domain approach.
\newblock {\em Comput. Model. Eng. Sci.}, 57(1):77--107, 2010.

\bibitem{Voigt-DDM2012}
S.~Aland, J.~Lowengrub, and A.~Voigt.
\newblock {Particles at ﬂuid-ﬂuid interfaces: a new
  Navier-Stokes-Cahn-Hilliard surface-phase-ﬁeld-crystal model}.
\newblock {\em Phys. Rev. E.}, 86(4):046321, 2012.

\bibitem{Braides2002}
A.~Braides.
\newblock {\em Gamma-convergence for beginners}.
\newblock Oxford Lecture Series in Mathematics and Its Applications, 22. Oxford
  University Press, 2002.

\bibitem{Braides2014}
A.~Braides.
\newblock {\em {Local Minimization, Variational Evolution and
  $\Gamma-$Convergence}}.
\newblock Lecture Notes in Mathematics. Springer Cham, 2014.

\bibitem{MR4642032}
M.~Buka\v{c}, B.~Muha, and A.~J. Salgado.
\newblock Analysis of a diffuse interface method for the {S}tokes-{D}arcy
  coupled problem.
\newblock {\em ESAIM Math. Model. Numer. Anal.}, 57(5):2623--2658, 2023.

\bibitem{Camley-DDM2013}
B.~Camley, Y.~Zhao, H.~Levine B.~Li, and W.-J. Rappel.
\newblock {Periodic Migration in a Physical Model of Cells on Micropatterns}.
\newblock {\em Phys. Rev. Lett.}, 111:158102, 2013.

\bibitem{Thornton-DDM2018}
A.~Chadwick, J.~Steward, R.~Du, and K.~Thornton.
\newblock {Numerical Modeling of Localized Corrosion Using Phase-Field and
  Smoothed Boundary Methods}.
\newblock {\em Journal of The Electrochemical Society}, 165(10):C633--C646,
  2018.

\bibitem{Lowengrub-DDM2014}
Y.~Chen and J.~S. Lowengrub.
\newblock {Tumor growth in complex, evolving microenvironmental geometries: A
  diffuse domain approach}.
\newblock {\em Journal of Theoretical Biology}, 361:14--30, 2014.

\bibitem{Evans2}
L.~C. Evans.
\newblock {\em Partial differential equations}.
\newblock Amer. Math. Soc., 2nd edition, 2010.

\bibitem{Fenton-DDM2005}
F.~H. Fenton, E.M. Cherry, A.~Karma, and W.J. Rappel.
\newblock Modeling wave propagation in realistic heart geometries using the
  phase-field method.
\newblock {\em CHAOS}, 15(1):13502, 2005.

\bibitem{Kockelkoren-DDM2003}
J.~Kockelkoren, H.~Levine, and W.-J. Rappel.
\newblock Computational approach for modeling intra- and extracellular
  dynamics.
\newblock {\em Phys. Rev. E}, 68(3):037702, 2003.

\bibitem{Leoni-gamma}
G.~Leoni.
\newblock {Gamma Convergence and Applications to Phase transitions}.
\newblock {\em {Lecture notes, 2013 CNA Summer School ``Topics in Nonlinear
  PDEs and Calculus of Variations,and Applications in Materials Science''}},
  2013.

\bibitem{LuongDDM2025-1}
T.~Luong, T.~Mengesha, S.~M. Wise, and M.~H. Wong.
\newblock {A Diffuse Domain Approximation with Transmission-Type Boundary
  Conditions I: Asymptotic Analysis and Numerics}.
\newblock {\em Int. J. Numer. Anal. Model.}, to appear.

\bibitem{Ratz-DDM2015}
A.~R\"{a}tz.
\newblock A new diffuse-interface model for step flow in epitaxial growth.
\newblock {\em IMA J. Appl. Math.}, 80(3):697--711, 2015.

\bibitem{Ratz-DDM2016}
A.~R\"{a}tz.
\newblock Diffuse-interface approximations of osmosis free boundary problems.
\newblock {\em SIAM J. Appl. Math.}, 76(3):910--929, 2016.

\bibitem{Voigt-DDM2009}
K.~E. Teigen, X.~Li, J.~Lowengrub, F.~Wang, and A.~Voigt.
\newblock A diffuse-interface approach for modeling transport, diffusion and
  adsorption/desorption of material quantities on a deformable interface.
\newblock {\em Commun Math Sci.}, 7:1009--1037, 2009.

\bibitem{Voigt-DDM2011-fluid}
K.~E. Teigen, P.~Song, J.~Lowengrub, and A.~Voigt.
\newblock A diffuse-interface method for two-phase flows with soluble
  surfactants.
\newblock {\em Journal of Computational Physics}, 230(2):375--393, 2011.

\bibitem{Thornton-DDM2016}
H.-C. Yu, M.-J. Choe, G.~Amatucci, Y.-M. Chiang, and K.~Thornton.
\newblock Smoothed boundary method for simulating bulk and grain boundary
  transport in complex polycrystalline microstructures.
\newblock {\em Comput. Mater. Sci.}, 121:14--22, 2016.

\end{thebibliography}

    \end{document}